\newtheorem{thm}{Theorem}
\newtheorem{cor}[thm]{Corollary}
\newtheorem{defn}[thm]{Definition}
\newtheorem{lem}[thm]{Lemma}
\newcommand{\C}{\mathcal{C}}
\newcommand{\G}{\mathcal{G}}
\newcommand{\List}{\mathcal{L}}
\newcommand{\N}{\mathcal{N}}
\newcommand{\Pa}{\mathcal{P}}
\newcommand{\La}{\mathcal{L}}
\newcommand{\bP}{\mathbb{P}}
\newcommand{\bE}{\mathbb{E}}
\newcommand{\bF}{\mathbb{F}}
\newcommand{\bN}{\mathbb{N}}
\newcommand{\PG}{$PG(2,\bF_q)$}
\newcommand{\dcn}{Distinguishing Chromatic Number \hspace{.1cm}}
\title{$\chi_D(G)$, $|Aut(G)|$, and a variant of the Motion Lemma }
\author{Niranjan Balachandran and Sajith Padinhatteeri,\\ Department of Mathematics,\\
Indian Institute of Technology Bombay,\\ Mumbai, India. }
\date{}
\begin{document}
\maketitle

\begin{abstract}
The \textit{Distinguishing Chromatic Number}  of a graph $G$, denoted $\chi_D(G)$, was first defined in \cite{collins} as the minimum number of colors needed to properly color $G$ such that no non-trivial automorphism $\phi$ of the graph $G$ fixes each color class of $G$. In this paper, 
\begin{enumerate}
\item We prove a lemma that may be considered a variant of the Motion lemma of \cite{RS} and use this to give examples of several families of graphs which satisfy $\chi_D(G)=\chi(G)+1$.
\item We give an example of families of graphs that admit large automorphism groups in which every proper coloring is distinguishing. We also describe families of graphs with (relatively) very small automorphism groups which satisfy $\chi_D(G)=\chi(G)+1$, for arbitrarily large values of $\chi(G)$.
\item We describe non-trivial families of bipartite graphs that satisfy $\chi_D(G)>r$ for any positive integer $r$.\end{enumerate} 
\end{abstract}

\textbf{Keywords:}
Distinguishing Chromatic Number, Automorphism group of a graph, Motion lemma,  Weak product of graphs.\\

2010 AMS Classification Code: 05C15, 05C25, 05C76, 05C80. 

\section{Introduction}

For a graph $G=(V,E)$ let us denote by $Aut(G)$, its full automorphism group. A labeling of vertices of a graph $G, h : V(G) \rightarrow  \{1, \hdots, r \}$
is said to be {\bf distinguishing} (or $r$-distinguishing)  provided no nontrivial automorphism of the graph preserves all of the vertex labels. The \textbf{Distinguishing number} of the graph $G$, denoted by $D(G)$, is the minimum $r$ such that $G$ has an $r$-distinguishing labeling (see \cite{AK}).

Collins and Trenk introduced the notion of the \textbf{Distinguishing Chromatic Number} in \cite{collins}, as the minimum number of colors $r$, needed to color the vertices of the graph so that the coloring is both proper and distinguishing. In other words, the \dcn is the least integer $r$ such that the vertex set can be partitioned into sets $V_1,V_2,\ldots, V_r$ such that each $V_i$ is independent in $G$, and for every $I\neq\pi\in Aut(G)$ there exists some color class $V_i$ such that $\pi(V_i)\neq V_i$.

The problem of determining the distinguishing chromatic number of a graph $G$, or at least good bounds for it,  has been one of considerable interest in recent times (\cite{CHT,LS,collins,kneser,Hemanshu}). Clearly, the notion of the distinguishing chromatic number begins to get more interesting only if the graph admits a large group of automorphisms, in which case, it can vary substantially from the usual chromatic number. It is known (see \cite{collins}) that $\chi_D(G)=|V|$ if and only if $G$ is complete multipartite. Consequently, it is  simple  to see that there exist graphs $G$ with $\chi(G) = k, \chi_D(G) = l+k$, for any $k,l,$ since for instance, a disjoint union of a clique of size $k$ and $K_{1,l}$ achieves the same. Some upper bounds for  $\chi_D(G)$ (for instance, a version of Brooks'  theorem for the distinguishing chromatic number) appear in \cite{CHT}, which also includes the inequality $\chi_D(G)\le D(G)\chi(G)$. However, in many interesting large naturally occurring families of graphs, we have  $\chi_D(G) \leq \chi(G)+1$ (see \cite{Hemanshu,kneser,collins,CHT}).      

In this paper, we seek to address three aspects of the problem of determining $\chi_D(G)$ for a given graph $G$. Firstly, we prove a lemma that may be considered a variant of what is now well known as the motion lemma, first introduced in \cite{RS}. The motion lemma basically says that  if every nontrivial automorphism of a graph moves `many' vertices then its distinguishing number is small. A similar lemma also appears in the context of graph endomorphisms and `endomorphism breaking' in \cite{eb}. As a result of our variant of the Motion lemma, we give examples of several families of graphs $G$ satisfying $\chi_D(G)=\chi(G)+1$. 

Secondly, we  contrast the relation between the size of the automorphism group $Aut(G)$ of a graph with its distinguishing chromatic number $\chi_D(G)$. A result describing an upper bound for $\chi_D(G)$ in terms of the prime factors of $|Aut(G)|$ appears in \cite{CHT}. However, our perspective is somewhat different. We demonstrate instances of families of graphs $G$ such that $G$ have large chromatic number, and $\chi_D(G)=\chi(G)+1$ even though $|Aut(G)|$ is not very large (we have $|Aut(G)|=O(|V|^{3/2})$). As a contrast, we also demonstrate a family of graphs with arbitrarily large chromatic number, with `super large' automorphism groups for which \textit{every} proper coloring of $G$ with $\chi(G)$ colors is in fact distinguishing. This latter example also addresses a point raised in \cite{Hemanshu} and these contrasting results indicate that the relation between $|Aut(G)|$ and $\chi_D(G)$ can tend to be  somewhat haphazard.

Finally, as we indicated earlier, while it is simple to give (the  trivial) examples of graphs $G$ with $\chi(G) = r, \chi_D(G) = r+s$, for any $r,s$, non-trivial examples are a little harder to come by. Clearly, adding a copy (not necessarily disjoint) of a large complete multipartite graph to an arbitrary graph achieves this goal but such examples, we shall consider `trivial' since the reason for the blowing-up of the distinguishing chromatic number is  trivially attributed to the presence of the complete multipartite component.  While it seems simple to qualitatively ascribe the notion of what constitutes a nontrivial example in this context, we find it somewhat tedious to describe it precisely.  Our last result in this paper describes what we would like to believe constitutes a nontrivial family of bipartite graphs $G$ such that $\chi_D(G)>r+s$, for any  $l,k\ge 2$. It turns out that large complete bipartite graphs do appear as  induced subgraphs in our examples, but that alone does not guarantee that the distinguishing chromatic number necessarily increases. Furthermore,  what makes these nontrivial in our opinion, is the fact that the distinguishing chromatic number of these graphs  is more than what one might initially guess. 

The rest of the paper is organized as follows. In section 2, we state and prove what we regard as a variant of the motion lemma and use this to establish instances of families of graphs with $\chi_D(G)=\chi(G)+1$ in section 3. In section 4, we describe two families of graphs - $\G_1$ and $\G_2$ with rather contrasting properties. For $G\in\G_1$, we have $\chi_D(G)=\chi(G)+1$ even though $|Aut(G)|=O(|V|^{3/2})$; for $G\in\G_2$,  $|Aut(G)|=\omega(e^{|V|})$ and yet every proper $\chi(G)$ coloring of $G$ is in fact distinguishing. In section 5, we describe a family of bipartite graphs for which $\chi_D(G)>r+s$, for any $r,s\ge 2$. The last section contains some concluding remarks and open questions.

\section{A Variant of the Motion Lemma}\label{motionlemma}
Following \cite{RS}, we recall that the motion of an automorphism $\phi \in Aut(G)$ is  defined as $$m(\phi) := \{v \in G:\phi(v) \neq v \}$$ and the motion of a graph $G$ is defined as $$m(G) := \mathop{\min}\limits_{\substack{\phi \in Aut(G) \\ \phi\ne I}} m(\phi).$$ The Motion lemma of \cite{RS} states that for a graph $G$, if $m(G) > 2 \log_2|Aut(G)|$ then $G$ is $2-$distinguishable. We prove a slightly more general criterion to obtain a similar conclusion for the distinguishing chromatic number. 

For a graph $G$ with full automorphism group $Aut(G)$, let $\G\subset Aut(G)$ be a subgroup of the automorphism group.
For $A \in \G$ and $S \subseteq V(G)$ we define $Fix_A(S) =\{ v \in S : A(v) = v\}$ and $F_A(S) = |Fix_A(S)|$. Let $F(S):=\displaystyle\max_{\substack{A\in\G\\ A\ne I}} F_A(S)$. 

\begin{defn}The \textit{Orbit of a vertex $v$ with respect to an automorphism $A$} is the set $$Orb_A(v) := \{p,Av, A^2v, \hdots A^{k -1 }v\}$$ where $A^kv = v.$\end{defn}

\begin{lem}[A variant of the motion lemma]\hfill \break\label{motionlemma}
Let $C$ be a proper coloring of the graph $G$ with $\chi(G)$ colors and let $C_1$ be a color class in $C$. Let $\G$ be the subgroup of $Aut(G)$ consisting of all automorphisms that fix the color class $C_1$. For each $A\in\G$, let $\theta_A$ denote the total number of distinct orbits induced by the automorphism $A$ in the color class $C_1$. If 
$$\sum_{A \in \G } t^{\theta_A-|C_1| }< r$$ where $r$ is the least prime dividing $|\G|$,
for some integer $t\geq 2$, then $\chi_D(G) \leq \chi(G) + t-1$. In particular, if $F(C_1) < |C_1| - 2 \log_t|\G|$ then this conclusion holds.
\end{lem}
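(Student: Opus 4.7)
The plan is to mount a probabilistic argument in the spirit of the Russell--Sundaram proof of the original motion lemma. I will keep the classes $C_2, \ldots, C_{\chi(G)}$ of $C$ untouched and recolor each vertex of $C_1$ independently and uniformly at random from a palette of $t$ colors: the original color of $C_1$ together with $t-1$ fresh colors not appearing in $C$. Since $C_1$ is independent in $G$, every such outcome is automatically a proper coloring using $\chi(G) + t - 1$ colors, so it suffices to produce a single outcome for which the resulting coloring is also distinguishing.

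Write $C_1^{(1)}, \ldots, C_1^{(t)}$ for the resulting sub-classes of $C_1$, and let $H \leq Aut(G)$ denote the (random) subgroup consisting of all automorphisms that fix every class of the new coloring setwise. Any $\phi \in H$ fixes each $C_1^{(j)}$ and hence their union $C_1$, so $H \leq \G$. Lagrange's theorem then forces $|H|$ to divide $|\G|$, so either $|H| = 1$ (the new coloring is distinguishing) or $|H| \geq r$. It will therefore suffice to show $\bE[|H|] < r$, because then some realization must satisfy $|H| < r$ and hence $|H| = 1$.

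To bound the expectation, first note that $\Pr[A \in H] = 0$ whenever $A \notin \G$, so $\bE[|H|] = \sum_{A \in \G} \Pr[A \in H]$. For $A \in \G$ to lie in $H$, two things must happen: $A$ must fix each $C_i$ with $i \geq 2$ (a deterministic property of $A$, not of the randomness), and the random recoloring must be constant on every orbit of $A$ restricted to $C_1$. The latter event has probability exactly $t^{\theta_A}/t^{|C_1|} = t^{\theta_A - |C_1|}$, so $\Pr[A \in H] \leq t^{\theta_A - |C_1|}$. Summing and using the hypothesis gives $\bE[|H|] \leq \sum_{A \in \G} t^{\theta_A - |C_1|} < r$, finishing the main statement.

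For the ``in particular'' clause I will use the elementary observation that every non-trivial orbit of $A$ on $C_1$ has length at least $2$, so $\theta_A \leq F_A(C_1) + (|C_1| - F_A(C_1))/2 = (|C_1| + F_A(C_1))/2$. Under $F(C_1) < |C_1| - 2\log_t|\G|$ this gives $t^{\theta_A - |C_1|} < 1/|\G|$ for every non-identity $A \in \G$, whence $\sum_{A \in \G} t^{\theta_A - |C_1|} < 1 + (|\G|-1)/|\G| < 2 \leq r$. The conceptual heart of the argument --- and what allows the hypothesis to be weakened from the naive union-bound requirement ``sum $< 1$'' to ``sum $< r$'' --- will be the Lagrange step: by bounding the expected size of $H$ (which is genuinely a subgroup of $\G$) rather than the probability that any particular $A$ is an offender, any positive $|H| < r$ is automatically forced down to $1$ for free. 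The rest of the proof is routine counting of colorings constant on orbits.
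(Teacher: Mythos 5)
Your proof is correct and follows essentially the same route as the paper's: randomly split $C_1$ into $t$ sub-classes, bound the expected size of the subgroup of automorphisms preserving every class of the new coloring by $\sum_{A\in\G} t^{\theta_A-|C_1|}$, and invoke Lagrange's theorem so that any realization with this subgroup of order less than the least prime $r$ dividing $|\G|$ forces it to be trivial. If anything, your treatment of the ``in particular'' clause is slightly more careful than the paper's, since you separate out the identity term (for which $\theta_I=|C_1|$) before applying the bound $\theta_A\le (|C_1|+F_A(C_1))/2$, which is only valid for $A\neq I$.
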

\begin{proof}
Let $1$ be the color assigned in the color class $C_1$. For each $v \in C_1,$ pick uniformly and independently, an element in $\{1, 2, \hdots, t \}$ and color $v$ using that color. Keep the labeling of all other vertices intact. This creates $t-1$ additional color classes. This new coloring $C'$ of $G$ is clearly proper; we claim that with positive probability, it is also distinguishing. 

For $A \in \G$, let $B_A$ denote the event that $A$ fixes every color class.  
Observe that if $A$ fixes a color class containing a vertex $v,$ then all other vertices in the set $orb_A(v)$ are also in the same color class. Moreover the probability that $Orb_A(v)$ is in the same color class of $v,$ equals $t^{1-{| Orb_A(v)|}}$. 
Then
$$ \bP(B_A) = \prod_{\theta_A}t^{1-| Orb_A(v)|} = t^{\theta_A -|C_1|}$$

Let $\N\subset\G$ denote the set of all automorphisms which fixes every color classes in $C'$ and let $N=|\N|$. Then note that
\begin{equation}\label{preqn}
\bE(N) = \sum_{A \in \G} \frac{1}{t^{|C_1| - \theta_A}}
\end{equation}
 If $\bE(N) < r$ then there exist a $\chi(G) + t-1$ proper coloring of $G$ satisfying $N<r$. Since $\N$ is in fact a subgroup of $\G$, $N$ divides $|\G|$, so if $\bE(N) < r$ it follows then that with positive probability, $\N=\{I\}$, so the coloring $C'$ is  distinguishing. 

In particular, since $\theta_A \leq F(C_1) + \frac{|C_1|-F(C_1)}{2}$ it follows from  equation (\ref{preqn}) that
\begin{equation*}
\bE(N) \leq \sum_{A \in \G} t^{\frac{F(C_1)-|C_1|}{2}}= |\G|t^{\frac{F(C_1)-|C_1|}{2}}.
\end{equation*}
Thus, if $F(C_1) < |C_1| - 2 \log_t|\G|$ then there exist a distinguishing proper $\chi(G) + t-1$ coloring of the graph.
\end{proof}
   
   \section{Examples} 
\subsection{Levi graphs}
In this subsection, we restrict our attention to Desarguesian projective planes and consider the Levi graphs of these projective planes, which are the bipartite incidence graphs corresponding to the set of points and lines of the projective plane. It is well known \cite{huges} that the theorem of Desargues is valid in a projective plane if and only if the plane can be constructed from a three dimensional vector space over a skew field, which in the finite case reduces to the three dimensional vector spaces over finite fields.

In order to describe the graphs we are interested in, we set up some notation. Let $\bF_q$ denote the finite field of order $q$, and let us denote the vector space $\bF_q^3$ over  $\bF_q$ by $V$.  Let $\Pa$ be the set of $1$-dimensional subspaces of $V$ and $\La$, the set of $2$-dimensional subspaces of $V$. We shall refer to the members of these sets by points and lines, respectively. The \textit{Levi graph} of order $q$, denoted by $LG_q$ is a bipartite graph defined as  follows: $V(LG_q) = \Pa\cup\La$, where this describes the partition of the vertex set; a point $p$ is adjacent to a line $l$ if and only if $p\in l$.

The Fundamental theorem of Projective Geometry \cite{huges} states that the full group of automorphisms of the  projective plane \PG   \hspace{0.05cm} is induced by the group of all non-singular semi-linear transformations $P\Gamma L(V)$ of $V$ onto $V,$ where $V$ is the corresponding vector space of \PG. If $q = p^n$ for a prime number $p$,   $P\Gamma L(V) \cong PGL(V) \rtimes Gal ( \frac{K}{k}),$ where $Gal (\frac{K}{k}) $ is the Galois group of $K:=\mathbb{F}_q$ over $k:=\mathbb{F}_p$. In particular, if  $q$ is a prime, we have $P\Gamma L(V) \cong PGL(V).$ The upshot is that $LG_q$ admits a large group of automorphisms, namely,  $P\Gamma L(V)$\footnote{It follows that this group is contained in the full automorphism group. The full group is  larger since it also includes  maps induced by isomorphism of the projective plane with its dual.}.

We first show that the distinguishing chromatic number for the Levi graphs $LG_q$ is precisely $3$ in almost all the cases. This is somewhat reminiscent of the result of \cite{contuck} for the distinguishing number of affine spaces. 
\begin{thm}
$\chi_D(LG_q) = 3$ for all prime powers $q \geq 5.$  
\end{thm}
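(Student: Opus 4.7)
The lower bound $\chi_D(LG_q)\geq 3$ is immediate: $LG_q$ is connected and bipartite with parts $\mathcal{P},\mathcal{L}$, so any proper $2$-coloring must assign one color to all of $\mathcal{P}$ and the other to all of $\mathcal{L}$. By the Fundamental Theorem of Projective Geometry, every element of $P\Gamma L(V)$ preserves this bipartition, so no nontrivial $2$-coloring can be distinguishing.

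For the upper bound $\chi_D(LG_q)\leq 3$, I would apply the variant of the motion lemma to the canonical proper $2$-coloring with $C_1=\mathcal{P}$. The subgroup $\mathcal{G}\leq Aut(LG_q)$ fixing $C_1$ setwise is exactly the group of bipartition-preserving automorphisms, namely $P\Gamma L(V)$; any duality of $LG_q$ swaps $\mathcal{P}$ with $\mathcal{L}$ and therefore does not lie in $\mathcal{G}$. With $t=2$, it suffices to show
\[
\sum_{A\in\mathcal{G}} 2^{\theta_A - |\mathcal{P}|} < 2,
\]
since $2$ is the smallest prime dividing $|\mathcal{G}|$ and $|\mathcal{P}|=q^2+q+1$. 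This would yield, with positive probability, a proper distinguishing $3$-coloring obtained by randomly splitting $\mathcal{P}$ into two color classes.

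To bound the sum, I would partition $\mathcal{G}\setminus\{I\}$ by the type of the fixed point configuration of the associated collineation of $PG(2,q)$, invoking standard facts: (i) homologies fix $q+2$ points; (ii) elations fix $q+1$ points; (iii) Baer-type collineations (possible when $q$ is not prime) fix a subplane $PG(2,q_0)$ of order $q_0\leq\sqrt{q}$, i.e.\ at most $q+\sqrt{q}+1$ points; (iv) all other collineations fix at most three points. For each type I would use the sharper orbit bound $\theta_A\leq F_A+(|\mathcal{P}|-F_A)/|A|$ rather than the crude $(|\mathcal{P}|+F_A)/2$, which is essential because $|\mathcal{G}|=\Theta(q^8\log q)$ is too large for the simplified form $F(C_1)<|C_1|-2\log_2|\mathcal{G}|$ of the lemma to apply directly at small $q$. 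Standard enumeration formulas for each conjugacy class (e.g.\ $(q^2+q+1)(q+1)(q-1)$ elations, etc.) combined with the orbit bound give a sum that decays rapidly in $q$.

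The main obstacle is the smallest case $q=5$, where the naive motion bound fails and one must carefully account for the orbit structure of each class. Here it helps that every involution in $PGL(3,5)$ is forced to be a homology (the only square roots of a scalar in $\mathbb{F}_5^\times$ are $\pm 1$), so off the $q+2=7$ fixed points the remaining $24$ points fall into $12$ orbits of size $2$, giving $\theta_A=19$ and a contribution of $775\cdot 2^{-12}$; the higher-order homologies, elations, and triangular elements each contribute similarly small amounts, and the total stays strictly below $1$, leaving room for the identity's contribution of $1$. Once $q=5$ is cleared, the same case analysis handles $q\geq 7$ with substantial slack, completing the proof.
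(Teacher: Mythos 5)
Your strategy coincides with the paper's: keep the bipartition coloring with $C_1=\mathcal{P}$, note that the class\mbox{-}preserving subgroup is $P\Gamma L(V)$, and apply the variant motion lemma with $t=2$, so that it suffices to make $\sum_{I\neq A}2^{\theta_A-|\mathcal{P}|}<1$. The difference is in how that sum is bounded. For $q\ge 7$ the crude uniform estimate $F_A\le q+2$ together with $\theta_A\le F_A+(|\mathcal{P}|-F_A)/2$ already suffices (the paper checks $q=7$ numerically and uses monotonicity in $q$), so your conjugacy\mbox{-}class decomposition buys nothing there; it is only needed at $q=5$, where the paper falls back on an exact machine computation of $\bE(N)$ and you propose a by\mbox{-}hand class\mbox{-}by\mbox{-}class estimate. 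That is a legitimate and arguably preferable route, but as written it has two genuine problems.

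First, the ``sharper orbit bound'' $\theta_A\le F_A+(|\mathcal{P}|-F_A)/|A|$ is false for elements of composite order: a non\mbox{-}fixed point's orbit length need only be a divisor of $|A|$ exceeding $1$, so the denominator can only be taken to be the least prime dividing $|A|$ (or one must compute the actual orbit structure). Concretely, $\mathrm{diag}(1,4,2)\in PGL(3,5)$ has order $4$ and fixes $3$ points, but its square is a homology fixing $7$ points, so it has two orbits of length $2$ and six of length $4$, giving $\theta_A=11$, whereas your bound asserts $\theta_A\le 10$. Second, and more seriously, your $q=5$ verification computes only the $775$ involutions and asserts the remaining classes ``contribute similarly small amounts.'' But the class where the naive size\mbox{-}$2$ orbit bound actually breaks is the $23250$ split regular semisimple elements of order $4$ (those fixing a triangle): with $\theta_A\le 3+28/2=17$ they contribute up to $23250\cdot 2^{-14}\approx 1.4$, which already exceeds the budget of $1$ on its own; only the genuine orbit count $\theta_A=11$ brings this down to $23250\cdot 2^{-20}\approx 0.02$. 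A similar refinement is needed for the classes fixing two points, whose orders are multiples of $5$. The numbers do work out in the end, but the step you wave at is exactly the one carrying the difficulty, so the $q=5$ case --- the only case where your extra machinery is required --- is not actually closed by the proposal. (A minor credit: your explicit allowance for Baer\mbox{-}type collineations fixing up to $q+\sqrt{q}+1$ points in the non\mbox{-}prime case is more careful than the paper's reuse of the $q+2$ bound there.)
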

\begin{proof}
Firstly, we consider the case when $q \geq 5$ and $q$ is prime and show that $\chi_D(LG_q) \leq 3.$ Consider a 2-coloring of $LG_q$ by assigning color $1$ to the point set $\mathcal{P}$ and color $2$ to the line set $\mathcal{L}$. It is easy to see that an automorphism of $LG_q$ that maps $\Pa$ into itself and $\La$ into itself corresponds to an automorphism of the underlying projective plane, and hence any such automorphism is necessarily in $PGL(V)$ (by the preceding remarks). 

In order to use lemma \ref{motionlemma},  observe that, any $I \neq A\in PGL(V)$ fixes at most $q+2$ points of $LG_q$.
Hence $$\theta_A\leq q+2+\frac{(q^2+q+1)-(q+2)}{2}=\frac{q^2 + 2q + 3}{2}.$$ Consequently,
\begin{equation}\label{eqnlevi5}
\bE(N) < \frac{(q^8 - q^6 - q^5 + q^3)}{2^{(q^2 + 1)/2}} +1
\end{equation}
{\bf Case 1:} $q \geq 7.$\\
For $q = 7, t = 2,$ the right  hand  side of  \ref{eqnlevi5} is approximately $1.3.$ Since the right hand side of  inequality \ref{eqnlevi5} is monotonically decreasing in $q$, it follows  that  $\bE(N)<2$ for $q \geq 7$, hence by lemma \ref{motionlemma} $LG_q$ admits a  proper distinguishing $3-$coloring. In particular,  $\chi_D(LG_q) = 3,$ for $ q \geq 7,$ since clearly, $\chi_D(LG_q)>2$.\\
{\bf Case 2:} $q = 5.$\\
In this case, for $t=2$ we actually calculate $\bE(N)$ using the open source Mathematics software SAGE to obtain $\bE(N) \approx 1.2.$ Therefore, again in this
case we have $\chi_D(LG_5) = 3.$

For $q=2$, it turns out that $\chi_D(LG_3)=4$ and for $q=3$ we are only able to prove $\chi_D(LG_5) \leq 5$. We include these proofs in the Appendix for the sake of completeness.

If $q=p^n$ for $n\ge 2$ and a prime $p$,  we note that the cardinality of the automorphism group of \PG\hspace{0.06cm} equals $$n|PGL(V)|\le \log_2(q)|PGL(V)|.$$
 
 As in the prime case, we have  $$\bE(N)
  \le \frac{\log_2 q(q^8 - q^6 - q^5 + q^3)}{t^\frac{q^2 + 1}{2}} + 1. $$
 
  For $q = 8$ and $t = 2$ the right hand side is approximately $1.01.$ By the same arguments as in the preceding section,  it follows that $\chi_D(LG_q) = 3.$ 
\end{proof}

For $q=4$ we calculate  $\bE(N) \approx 1.2.$ for $q=4,$ and $t=3$ using SAGE to make the actual computation, so we have  
 $\chi_D(LG_4) \leq 4$.  We believe that $\chi_D(LG_4) = 3$ though again, our methods fall short of proving this.

\subsection{Levi graphs of order one}
 Suppose $n,k\in\bN$ and $2k<n$, and consider the bipartite graphs  $G = G(L,R,E)$ where $L:=\binom{[n]}{k-1}$ corresponds to the  set of $k-1$ subsets of $[n]$, $R:=\binom{[n]}{k}$ corresponds to the $k$ subsets of $[n]$, and $u\in L, v\in R$ are adjacent if and only if $u\subset v$. We shall refer to these graphs as \textit{Levi Graphs of order $1$} and we shall denote them by   $LG_1(k,n)$, or sometimes, simply $LG_1$. Note that for each $u\in L, v\in R$ we have  $d(u) = n-k+1$ and $d(v) = k.$
 
It is clear that $S_n\subset Aut(LG_1)$. But in fact $Aut(LG_1) = S_n$, and this is a fairly routine exercise, so we skip these details.

 
We shall use lemma \ref{motionlemma} to determine the distinguishing chromatic number of $LG_1(k,n)$. Following the notation of the lemma, set $F_{\sigma} := \{ v \in R : \sigma(v) = v \}$ for  $\sigma \in S_n$   and let $F=\displaystyle\max_{\substack{\sigma\in S_n\\\sigma\ne I}} |F_{\sigma}|$. 

\begin{lem}\label{setfix}
For $n>4$,  $F \leq {n-2 \choose k-2} + {n-2 \choose k}$ and equality is attained if and only if $\sigma$ is a transposition $(ij)$ for some $i\ne j$. 
\end{lem}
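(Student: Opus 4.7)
The plan rests on the elementary observation that, for $\sigma \in S_n$ acting on $R=\binom{[n]}{k}$, a $k$-subset $A$ is $\sigma$-invariant if and only if $A$ is a union of cycles of $\sigma$ on $[n]$. With this in hand, I would first establish the upper bound by the following device. Since $\sigma \ne I$ admits some non-trivial cycle, pick any two elements $i,j$ lying in a common non-trivial cycle. Every $A \in F_\sigma$ must contain the entire cycle through $i$ (and hence both $i$ and $j$) or be disjoint from it (and hence from $\{i,j\}$). Counting the two classes separately gives $|F_\sigma| \le \binom{n-2}{k-2}+\binom{n-2}{k}$.

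Next I would verify the transposition case. If $\sigma=(ij)$, its cycles are $\{i,j\}$ together with $n-2$ singletons, so the $\sigma$-invariant $k$-subsets are exactly the $\binom{n-2}{k-2}$ sets containing $\{i,j\}$ and the $\binom{n-2}{k}$ sets avoiding $\{i,j\}$. Equality is attained.

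The bulk of the work is the converse: strict inequality whenever $\sigma \ne I$ is not a transposition. I would split into two sub-cases based on cycle structure. \emph{Case (a):} $\sigma$ contains a cycle of length $\ge 3$, with distinct elements $i,j,l$. Then for any $B \subseteq [n]\setminus\{i,j,l\}$ of size $k-2$, the set $\{i,j\}\cup B$ is counted in $\binom{n-2}{k-2}$ but fails $\sigma$-invariance, because it contains $i$ but not $l$, which lies in the same cycle. \emph{Case (b):} $\sigma$ is a product of $r\ge 2$ disjoint transpositions, say $(ij)$ and $(i'j')$ among them. Applying the upper bound with the pair $\{i,j\}$, the subset $\{i'\}\cup B$ with $B\subseteq [n]\setminus\{i,j,i',j'\}$ of size $k-1$ is disjoint from $\{i,j\}$, hence counted by $\binom{n-2}{k}$, but contains $i'$ without $j'$ and therefore is not $\sigma$-invariant.

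The main obstacle is the bookkeeping needed to ensure the ``bad'' sets in (a) and (b) exist: one must check $k-2 \le n-3$ and $k-1 \le n-4$, and this is precisely where the hypotheses $n>4$ and $2k<n$ are invoked. Small boundary values of $k$ (especially $k=1$, where $|F_\sigma|$ just counts the fixed points of $\sigma$) are handled separately via the elementary fact that any non-identity permutation of $[n]$ has at most $n-2$ fixed points, with equality exactly for transpositions.
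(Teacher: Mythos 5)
Your proof is correct and follows essentially the same route as the paper: both arguments use the fact that a fixed $k$-set must contain or avoid each cycle of $\sigma$ entirely to get the bound, then split into the cases of a cycle of length $\ge 3$ versus two disjoint transpositions and exhibit an explicit set that is fixed by a single transposition but not by $\sigma$. The only cosmetic difference is that the paper packages the bound as a strict containment $F_\pi \subsetneq F_{(12)}$ while you count the two classes directly; the witnessing sets are the same.
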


\begin{proof} Firstly, it is easy to see that if $\sigma=(12)$ then $|F_{\sigma}|= {n-2 \choose k-2} + {n-2 \choose k}$, so it suffices to show that for any $\pi$ that is not of the above form, $|F_{\pi}|<|F_{\sigma}|$.

 Suppose not, i.e., suppose $\pi \in S_n$ is not an involution and $|F_{\pi}|$ is maximum. Write $\pi = O_1 O_2 \hdots O_t$ as a product of disjoint cycles with $|O_1|\ge|O_2|\ge\cdots\ge|O_t|$.  Then either $|O_1|>2$, or $|O_1| = |O_2| =2$. If $ |O_1| > 2$, then suppose without loss of generality, let $O_1 = (123\cdots)$
 If $h \in F_{\pi} $ then either $\{1,2\}
\subset h$ or $\{1,2\} \cap h = \emptyset.$ In either case we observe that  $h \in F_{\sigma}$ as well. Therefore $F_{\pi} \subset F_{\sigma}.$ Furthermore, note that $\sigma$ fixes the set $g = \{1, 2, 4, \hdots, k+1\},$ while $\pi$ does not. Hence $|F_{\sigma}| > |F_{\pi}|$, contradicting that $|F_{\pi}|$ is maximum.  If $|O_1|= |O_2|=2$, again  without loss of generality let $O_1= (12), O_2=(34).$ Again, $h\in F_{\pi}$ implies that either $\{1,2\} \subset h$ or $\{1,2\} \cap h =\emptyset,$ so once again, $h\in F_{\sigma}\Rightarrow h\in F_{\pi}$.  Furthermore, $\{1,2,3,5,\hdots,k+1\}\in F_{\sigma}\cap \overline{F_{\pi}},$ which contradicts the maximality of $|F_{\pi}|$.\end{proof}
For $k\ge 2$ define $n_0(k) := 2k+1$ for $k\ge 3$ and $n_0(2) := 6$.
\begin{thm}
 $\chi_D(LG_1(k,n)) = 3$ for $k\ge 2$ for $n\ge n_0(k)$. 
\end{thm}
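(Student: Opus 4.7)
The plan is to prove $\chi_D(LG_1(k,n)) \leq 3$ by combining Lemma \ref{motionlemma} for $k \geq 3$ with a direct construction for $k = 2$. The lower bound $\chi_D \geq 3$ is immediate: $LG_1(k,n)$ is connected and bipartite, so its essentially unique proper $2$-coloring places one color on $L = \binom{[n]}{k-1}$ and the other on $R = \binom{[n]}{k}$, and this partition is fixed by every $\sigma \in Aut(LG_1) = S_n$.

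For $k \geq 3$ I would apply Lemma \ref{motionlemma} with $t = 2$ to the bipartition coloring, taking $C_1 = R$ (the larger class, for more room). Every $\sigma \in S_n$ preserves the bipartition, so the subgroup fixing $L$ is all of $\G = S_n$, with smallest prime divisor $r = 2$. By Lemma \ref{setfix}, $F_\sigma$ is maximized on non-identity $\sigma$ by transpositions, giving $F(R) = \binom{n-2}{k-2} + \binom{n-2}{k}$; Pascal's identity yields $|R| - F(R) = 2\binom{n-2}{k-1}$. When $\binom{n-2}{k-1} > \log_2 n!$—which I would verify for $k \geq 4$ and $n \geq 2k+1$, since $\binom{2k-1}{k-1}$ grows as $4^k/\Theta(\sqrt{k})$ while $\log_2 n! = O(n \log n)$—the ``in particular'' form of Lemma \ref{motionlemma} already yields the conclusion. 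For the remaining borderline cases (in practice $k = 3$ with $n \in \{7,8\}$) I would instead bound the finer sum $\sum_{\sigma \neq I} 2^{\theta_\sigma - |R|}$ directly, splitting into two pieces: the transposition contribution $\binom{n}{2} \cdot 2^{-\binom{n-2}{k-1}}$, and the remaining non-identity permutations, which contribute at most $n! \cdot 2^{-(|R| - F')/2}$, where $F'$ is the maximum of $F_\sigma$ over non-transpositions (attained at a double transposition, and strictly smaller than $F(R)$). A short numerical check confirms this sum is comfortably below $1$ in the critical case $(k,n) = (3,7)$.

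For $k = 2$ the motion lemma becomes too weak at $n = 6$, where transpositions alone contribute $\binom{6}{2} \cdot 2^{-4} = 15/16 > 1/2$. I would instead construct the $3$-coloring explicitly, using the classical fact that an asymmetric graph on $n$ vertices exists for every $n \geq 6$ (and not for $n = 5$), which is precisely what motivates $n_0(2) = 6$. Choosing any asymmetric $H$ on $[n]$, I would color all of $L$ with color $1$, assign color $3$ to each vertex of $R$ corresponding to an edge of $H$, and color $2$ to the remaining vertices of $R$. Properness is automatic in the bipartite $LG_1$; any $\sigma \in S_n$ preserving all three color classes must preserve $E(H)$ setwise, hence lies in $Aut(H) = \{I\}$, so the coloring is distinguishing.

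The main obstacle is the numerical tightness of Lemma \ref{motionlemma} at $(k,n) = (3,7)$: there $\binom{n-2}{k-1} = 10$ is just below $\log_2 n! \approx 12.3$, so the ``in particular'' form fails and one must carefully bound the exponential sum by cycle type to verify the hypothesis. A secondary subtlety is that the $k = 2$ case is genuinely different in character—the motion lemma cannot be pushed through there—and the threshold $n_0(2) = 6$ is dictated exactly by the minimum order of an asymmetric graph.
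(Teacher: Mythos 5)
Your proposal is correct and its overall architecture matches the paper's: a motion-lemma argument for large $k$ and an explicit coloring built from an asymmetric graph for small $k$. The one genuine divergence is where you draw the line. The paper disposes of \emph{both} $k=2$ and $k=3$ with the asymmetric-graph device: it fixes a specific asymmetric graph $A$ on $[n]$ (a path with an added chord) and uses the color classes $L, A, R\setminus A$ for $k=2$ and, symmetrically, $R, A, L\setminus A$ for $k=3$ --- exploiting that for $k=3$ the side $L=\binom{[n]}{2}$ is itself a set of potential edges --- so that a color-preserving automorphism would be a nontrivial automorphism of $A$. It then invokes Lemma \ref{motionlemma} only for $k\ge 4$, where the crude bound $\bE(N)\le n!\,2^{-K}+1$ with $K=\binom{n-2}{k-1}$ suffices outright. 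You instead push Lemma \ref{motionlemma} down to $k=3$, which, as you correctly diagnose, fails in its ``in particular'' form at $(k,n)=(3,7)$ and $(3,8)$ and forces a finer bound on $\sum_{\sigma\ne I}2^{\theta_\sigma-|R|}$ by cycle type (transpositions contributing $\binom{n}{2}2^{-\binom{n-2}{2}}$, the rest controlled via the maximum of $F_\sigma$ over non-transpositions, attained at a double transposition). That computation does close --- at $(3,7)$ one gets roughly $21\cdot 2^{-10}+7!\cdot 2^{-14}<1$ --- but it requires verifying the second-largest fixed-set count, which Lemma \ref{setfix} as stated does not hand you. The paper's route avoids this numerical tightness entirely at the cost of a separate ad hoc construction; yours is more uniform but needs the extra verification. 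Your $k=2$ argument and your lower bound $\chi_D>2$ are exactly the paper's (the paper leaves the lower bound implicit).
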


\begin{proof}  We deal with the cases  $k =2, k=3 $ first, and then consider the general case of  $k>3.$

For $k=2$, let $A=\{(1,2), (2,3), (2,4), (3,4), (4,5), (5,6), \hdots, (n-1,n) \}$, and consider the coloring with the color classes being  $L, A, R \setminus A$. Consider the graph $G$ with $V(G) = [n]$ and $E(G) = A.$  Observe that the only automorphism $G$ admits is the identity. Since a nontrivial automorphism that preserves all the color classes of this coloring must in fact be a nontrivial automorphism of $G$, it follows that the coloring described is indeed distinguishing. If $k=3$, note that the color classes described by the sets $R,A, L\setminus A$ is proper and distinguishing for the very same reason.

For the case $k\ge 4$, we use lemma \ref{motionlemma} with $t=2$. From Lemma \ref{setfix} we have $F \leq {n-2 \choose k-2} + {n-2 \choose k}$. Let $C_1 = R$ be the color class to be parted randomly and assign color $3$ to all vertices in $L=\binom{[n]}{k-1}$. Then we have,
\begin{equation}\label{prlg1}
\bE(N) \leq |Aut(LG_1)|2^{\frac{1}{2}(F-|C_1|)}+1,
\end{equation}
 where $|C_1|= {n\choose k}$. 

Therefore from Equation \ref{prlg1}, we have 
$$ \bE(N)\leq \frac{n!}{2^K} +1,$$
where $K = \frac{{n \choose k} - {n-2 \choose k-2} - {n-2 \choose k}}{2}.$
For $n>2k$ it is not hard to show that $ \frac{n!}{2^K} < 1$ for $n\ge n_0(k)$, so, by lemma \ref{motionlemma} we are through.
\end{proof} 

\subsection{Weak product of Graphs}
The distinguishing chromatic number of a Cartesian product of graphs has been studied in \cite{Hemanshu}. The fact that any graph can be uniquely (upto a permutation of the factors) factorized into prime graphs with respect to the Cartesian product plays a pivotal role in determining the full automorphism group. In contrast, an analogous theorem for the weak product only holds under certain restrictions. In this subsection, we consider the $n$-fold weak product of certain graphs and consider the problem of determining their distinguishing chromatic number.
  
To recall the definition again, the weak product (or Direct product as it is sometimes called) of graphs $G,H$ denoted $G \times H$, is defined as follows: $V(G \times H) = V(G) \times V(H)$. Vertices $ (g_1, h_1), (g_2, h_2)$ are adjacent if and only if $\{g_1,g_2\}\in E(G)$ and $\{h_1, h_2\}\in E(H)$. We first collect a few basic results on the weak product of graphs following \cite{rws}. For more details we refer the interested reader to the aforementioned handbook.

 Define an equivalence relation $R$ on $V(G)$ by setting $xRy$ if and only if $N(x) = N(y)$ where $N(x)$ denotes the set of neighbors of $x$.  A graph $G$ is said to be $R-thin$ if each equivalence class of $R$ is a singleton, i.e., no distinct $x,y\in V(G)$ have the same set of neighbors. A graph $G$ is \textbf{prime with respect to the weak product}, or simply prime, if it is nontrivial and $G \cong G_1 \times G_2$ implies that either $G_1$ or $G_2$ equals $K_1^s$, where $K_1^s$ is a single vertex with a loop on it.  Observe that $K_1^s \times G \cong G$. 
 
Before we state our main theorem of this subsection, we state two useful results regarding the weak product of graphs. If $G$ is connected, nontrivial, and non-bipartite then the same holds for  $G^{\times n}$. This is a simple consequence of a theorem of Weischel (see  \cite{rws} for more details ). The other useful result is the following theorem which also appears in \cite{rws}.

 \begin{thm}\label{autweak}
 Suppose $\phi$ is an automorphism of a connected nonbipartite $R-thin$ graph $G$ that has a prime factorization $G \cong G_1 \times G_2 \times \hdots \times G_k$ . Then there exist a permutation $\pi$ of $\{1,2,\hdots, k\},$ together with isomorphisms $\phi_i: G_{\pi(i)} \rightarrow G_i,$ such that $$\phi(x_1, x_2, \hdots, x_k) = (\phi_1(x_{\pi(1)}), \phi_2(x_{\pi(2)}), \hdots, \phi_k(x_{\pi(k)})).$$ 
 \end{thm}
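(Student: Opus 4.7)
The plan is to combine two ingredients: the uniqueness (up to permutation and factor-isomorphism) of the prime factorization of connected, nonbipartite, $R$-thin graphs under the weak product, and an intrinsic graph-theoretic characterization of the ``fibers'' of the product that must be preserved by any automorphism. First, I would apply $\phi$ to the given factorization $G \cong G_1 \times \cdots \times G_k$ and obtain a second prime factorization of $G$ into the $\phi$-images of the coordinate fibers. Uniqueness of prime factorization then yields a permutation $\pi$ of $\{1, \ldots, k\}$ together with abstract isomorphisms $G_{\pi(i)} \to G_i$. This matches factors to factors but does not yet give the coordinate-wise form claimed.

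The substantive step is to show that the fibers $\{(x_1, \ldots, x_k) : x_j = a_j \text{ for all } j \neq i\}$ are canonically recoverable from the abstract graph $G$, independently of any chosen factorization. The standard tool here is the \emph{Cartesian skeleton} of $G$: an auxiliary graph built from purely local neighborhood data whose connected components encode the fiber equivalence. The $R$-thin hypothesis is needed so that distinct vertices are never confused through shared neighborhoods; the connected, nonbipartite hypotheses rule out the well-known ambiguities of weak-product factorization that plague bipartite graphs (essentially, the $G \mapsto G \times K_2$ pathologies). Since $\phi$ is an automorphism of $G$, it induces an automorphism of the canonically defined skeleton and therefore permutes fibers in a manner consistent with $\pi$. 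A basepoint argument then extracts the isomorphisms $\phi_i : G_{\pi(i)} \to G_i$: fix a base vertex $v = (v_1, \ldots, v_k)$, identify the $i$-th fiber through $v$ with a copy of $G_i$, track where $\phi$ sends this fiber, and read off $\phi_i$ from the induced map. Independence of the basepoint then follows from connectivity of $G$ and the fact that $\phi$ acts on the fiber structure by a single permutation.

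The main obstacle is establishing the canonicity of the fibers, which is the content of the Cartesian skeleton theorem. This is the technical heart of the result, and it is precisely where all three hypotheses (connected, nonbipartite, $R$-thin) become indispensable, since each is known to have explicit counterexamples when dropped. Granting the skeleton theorem (which is quoted in \cite{rws}), the remaining steps amount to bookkeeping: the permutation is produced by unique factorization, the component maps $\phi_i$ are produced by restricting $\phi$ to a single fiber, and the coordinate-wise formula falls out by writing an arbitrary vertex as the simultaneous intersection of the fibers through it.
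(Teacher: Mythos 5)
The paper does not prove this statement at all: Theorem \ref{autweak} is quoted verbatim from the Handbook of Product Graphs (\cite{rws}) as a known result, so there is no in-paper argument to compare yours against. That said, your sketch correctly reproduces the standard proof strategy from that source: unique prime factorization of connected nonbipartite graphs under the weak product supplies the permutation $\pi$ and the abstract isomorphisms, the Cartesian skeleton supplies the canonical recovery of the coordinate fibers (this is where connectedness, nonbipartiteness, and $R$-thinness are each used, exactly as you say), and a basepoint argument converts fiber-preservation into the coordinate-wise formula. Your account is honest about where the weight sits --- you explicitly grant the skeleton theorem rather than prove it --- which puts your write-up at essentially the same level of rigor as the paper's treatment (a citation), just with more of the internal architecture exposed. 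If you wanted a self-contained proof you would need to actually establish the skeleton theorem and the unique-factorization theorem, both of which are substantial; as a blind reconstruction of how the cited result is obtained, your outline is accurate and correctly identifies the one genuinely hard ingredient.
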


We are now in a position to state our main result regarding the distinguishing chromatic number for a weak product of prime graphs. An analogous result for the cartesian product of graphs, under milder assumptions,  appears in \cite{Hemanshu}.
 \begin{thm}\label{thmweak}
 Let $G$ be a connected, nonbipartite, $R-thin$, prime graph on at least $3$ vertices. Denote by $G^{\times n}$ the $n$-fold product of $G$, i.e., $G^{\times n}:=\overbrace{G \times G \times \hdots \times G}^{n\textrm{-times}}$.  Suppose further that $G$ admits a proper $\chi(G)$ coloring with a color class $C_1$ such that no non-trivial automorphism of $G$ fixes every vertex of $C_1$. Then $\chi_D(G^{\times n} ) \leq \chi(G) + 1$ for $n\ge 4$.
\end{thm}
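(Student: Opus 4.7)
The plan is to apply the variant of the motion lemma (Lemma~\ref{motionlemma}) with $t=2$ to the proper $\chi(G)$-coloring $c'$ of $G^{\times n}$ obtained by pulling back the hypothesised coloring $c$ of $G$ along the first coordinate, i.e.\ $c'(x_1, \ldots, x_n) := c(x_1)$. The color class corresponding to $C_1$ is then $C_1' := C_1 \times V(G)^{n-1}$, of size $|C_1| N^{n-1}$ where $N := |V(G)|$, and the goal is to show that a random two-coloring of $C_1'$ produces a proper distinguishing $(\chi(G)+1)$-coloring with positive probability.

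First I would pin down the subgroup $\G \leq \text{Aut}(G^{\times n})$ stabilising $C_1'$ setwise. Since $G$ is connected, non-bipartite and $R$-thin, so is $G^{\times n}$ (Weichsel's theorem handles the first two, and the identity $N((x_i)_i) = \prod_i N(x_i)$ for neighbourhoods in a direct product gives the third), and its prime factorisation is the $n$-fold product $G \times \cdots \times G$. Theorem~\ref{autweak} then applies: every $\phi \in \text{Aut}(G^{\times n})$ has the form $\phi(x_1, \ldots, x_n) = (\phi_1(x_{\pi(1)}), \ldots, \phi_n(x_{\pi(n)}))$ with $\pi \in S_n$ and $\phi_i \in \text{Aut}(G)$. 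A short argument (if $\pi(1) = j \neq 1$, then requiring $\phi_1(x_j) \in C_1$ for every choice of $x_j \in V(G)$ forces $C_1 = V(G)$, contradicting $\chi(G) \geq 2$) shows that $\phi \in \G$ iff $\pi(1) = 1$ and $\phi_1(C_1) = C_1$. Consequently $|\G| \leq A^n (n-1)!$, where $A := |\text{Aut}(G)|$.

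The central combinatorial step is a lower bound on the motion $m_\phi := |C_1'| - |\text{Fix}_\phi(C_1')|$ for each non-identity $\phi \in \G$. I would split into two cases. If $\phi_1 \neq \text{id}$, the hypothesis supplies some $v \in C_1$ with $\phi_1(v) \neq v$, and then every vertex in the ``slab'' $\{v\} \times V(G)^{n-1}$ is moved by $\phi$, so $m_\phi \geq N^{n-1}$. If $\phi_1 = \text{id}$, then non-triviality of $\phi$ forces either some $\phi_j$ ($j \geq 2$) or the restriction of $\pi$ to $\{2, \ldots, n\}$ to be non-trivial; a direct count on this witnessing coordinate yields $m_\phi \geq 2|C_1| N^{n-2}$. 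Since $\theta_\phi \leq |C_1'| - m_\phi/2$, splitting the sum by whether $\phi_1 = \text{id}$ gives
\[
\sum_{\phi \in \G \setminus \{I\}} 2^{\theta_\phi - |C_1'|} \;\leq\; A^n (n-1)! \cdot 2^{-N^{n-1}/2} \;+\; A^{n-1} (n-1)! \cdot 2^{-|C_1| N^{n-2}}.
\]
For $n \geq 4$, the exponential-in-$N^{n-2}$ decay dominates the prefactors (using $A \leq N!$ and noting that the hypothesis rules out the few small cases such as $K_N$ where this would be in doubt), so the right-hand side is strictly less than $1$. Since $\G$ contains the coordinate transposition arising from $\pi = (2\,3)$ and all $\phi_i = \text{id}$, $|\G|$ is even, so its least prime divisor is $2$. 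Thus the total sum over $\G$ is strictly less than $2$, and Lemma~\ref{motionlemma} delivers the required $(\chi(G)+1)$-coloring.

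The main difficulty I anticipate is in establishing the uniform motion bound when $\phi_1 = \text{id}$: the hypothesis supplies no information in this case, and the bound $m_\phi \geq 2|C_1| N^{n-2}$ must be extracted purely from the combinatorial geometry of the direct product, uniformly across all interactions between the cycle structure of $\pi|_{\{2,\ldots,n\}}$ and arbitrary choices of $\phi_2, \ldots, \phi_n \in \text{Aut}(G)$. The threshold $n \geq 4$ enters precisely here: it is what makes the motion exponent $N^{n-2}$ large enough to dominate $\log_2|\G| = O(n \log A)$ across every admissible $G$; for $n = 3$ this crude estimate does not close, and one would need either a sharper motion analysis or a different base coloring.
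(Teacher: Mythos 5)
Your proposal is correct and follows essentially the same route as the paper: the same product coloring with $C_1'=C_1\times V(G)^{n-1}$, the same appeal to Theorem \ref{autweak} (via $R$-thinness and unique prime factorization of $G^{\times n}$) to get the wreath-product structure, and the same application of Lemma \ref{motionlemma} with $t=2$ after a case analysis on the shape of an automorphism — you split on $\phi_1=\mathrm{id}$ versus not where the paper splits on $\pi=I$ versus not, but the resulting fixed-point counts are equivalent. If anything your endgame is more careful than the paper's: you correctly flag that the crude bound $|Aut(G)|\le |V(G)|!$ alone does not close the case of very small $|C_1|$ and that the hypothesis on $C_1$ (trivial pointwise stabilizer, hence $|Aut(G)|\le |V(G)|^{|C_1|}$) must be invoked there, whereas the paper's inequality $|C_1|m^{t-1}\le(|C_1|-2)m^{n-1}$ silently fails when $|C_1|\le 2$.
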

\begin{proof} 
 Let $G$ be connected, non-bipartite, $R-thin$, and prime. We first claim that 
 $$Aut(G^{\times n}) \cong Aut(G) \wr S_n, $$ the wreath product of $Aut (G)$ and $S_n$.
To see this, note that if $G$ is $R-thin$ one can easily check that $G^{\times n}$ is also $R-thin$. Moreover since every connected non-bipartite nontrivial graph  admits a unique prime factorization for the weak product (see \cite{rws}),  it is a simple application of theorem \ref{autweak} to see that $Aut(G^{\times n}) \cong Aut(G) \wr S_n $. This proves the claim.
 
Suppose $\chi(G) = r$ and let $\{C_i: i \in [r]\}$ be a proper coloring of $G$. Then $C_i \times G^{\times n-1}, i\in [r]$ is a  proper $r$ coloring of the graph $G^{\times n}$, so $\chi(G^{\times n}) \leq r$. On the other hand, the map $g\to(g,g\hdots,g)$ is a graph embedding of $G$ in $G^{\times n}$, so $\chi(G^{\times n})=r$. 
Let us denote the aforementioned color classes of $G^{\times n}$ by $C'_i, i \in [r]$. We claim that $\chi_D(G^{\times n}) \leq r+1$ and show this as a consequence of
lemma \ref{motionlemma}.

By hypothesis there exist a color class, say $C_1$ in $G$ such that no nontrivial automorphism fixes each $v\in C_1$.   Consider $C'_1 = C_1 \times G^{\times n-1}$ and for each element in $C'_1$ assign a value from $\{1,r+1\}$ uniformly and independently at random. This describes  a proper $(r+1)-$coloring of $G^{\times n}$. By lemma \ref{motionlemma}, we have  
\begin{equation}\label{eqnweak}
\bE(N) \leq n!|Aut(G)|^n 2^{\frac{F-T}{2}} + 1
\end{equation}
where $T=|C_1 \times G^{\times n-1}|$ and $F$ is as in lemma \ref{motionlemma}. 

 \textbf{Claim:} If there exists a nontrivial automorphism of $G^{\times n}$ which fixes each  color class $C'_i, i=1\ldots, r$, then it cannot also fix each vertex of  $C_1'$. 
 
 To prove the claim, suppose $\psi$ is an automorphism go $G^{\times n}$ which fixes $C'_i$ for each $i\in[r]$, and also fixes $C'_1$ point-wise. By theorem \ref{autweak}, there exist $\phi_1, \phi_2, \hdots , \phi_n\in Aut(G)$ and $\pi \in S_n$ such that 
\begin{equation}\label{eqnpsi}
\psi(x_1, x_2, \hdots, x_n) = (\phi_1(x_{\pi(1)}), \phi_2(x_{\pi(2)}), \hdots, \phi_n(x_{\pi(n)}))
\end{equation} for all $(x_1,x_2,\ldots,x_n)\in G^{\times n}$.
Now note that if $\psi$ fixes $C'_1$ point-wise then $\phi_1$ fixes $C_1$ point-wise. Indeed, 
\begin{eqnarray}
\psi(x_1, x_2, \hdots, x_n) &=& (x_1, x_2, \hdots, x_n)\nonumber\\
\iff (\phi_1(x_{\pi(1)}), \phi_2(x_{\pi(2)}), \hdots, \phi_n(x_{\pi(n)})) &=& (x_1, x_2, \hdots, x_n) \nonumber \\
\iff \phi_i(x_{\pi(i)})&=& x_i \quad \textrm{\ for\ all\ } i\in[r].\label{eqnfix}
\end{eqnarray} 
Since \ref{eqnfix} holds for all vertices $(x_1, x_2, \hdots, x_n)\in G^{\times n}$ with $x_1 \in C_1$ and $x_i \in G$ , $2 \leq i \leq n$, we conclude that $\pi=I$, $\phi_i=I$, for  $2\leq i \leq n$, and $\phi_1$ acts trivially on  $C_1$. But then by the hypothesis on $G$, it follows that $\phi_1= I$ in $G$ and hence $\psi=I$. 

 We now show that  $F \le (|C_1| -2)|G|^{n-1}.$
 
We adopt similar notations as in Lemma \ref{motionlemma} and for simplicity, let us denote $|G|=m$.
For $\psi \in Aut(G^{\times n})$ we shall write $\psi = (\phi_1, \phi_2, \hdots, \phi_n: \pi)$ to denote the map $$\psi( x_1, x_2, \hdots, x_n) = (\phi_1(x_{\pi(1)}), \phi_2(x_{\pi(2)}), \hdots, \phi_n(x_{\pi(n)}))$$ as in equation \ref{eqnpsi} (see theorem \ref{autweak}). Suppose $\psi$ fixes the vertex $( x_1, x_2, \hdots, x_n) \in G^{\times n}$. In particular
we have  $x_{\pi(i)} = \phi_i^{-1}(x_i)$ for all $i$. It then follows that for all $k$, we have $$\phi^{-1}_{\pi^k(i)}\left(x_{\pi^k(i)}\right)=x_{\pi^{k+1}(i)}$$ for each $i$. Consequently, if $\pi$ has $t$ cycles in its disjoint cycle representation then  $\psi$ can fix at most $|C_1|m^{t-1}$ vertices in $C'_1$. 

If $\pi\ne I$, then $t<n$, and in this case, since $m\ge 3, n\ge 4$, we have $|C_1|m^{t-1}\le (|C_1| -2)m^{n-1}$.  If $\pi=I$, then $\psi$ is non-trivial if and only if $\phi_i\ne I$ for some $i$. In this case  $\phi_i(x_i) = x_i$ for all $i$, so  $(x_1,x_2,\ldots,x_n)$ is fixed by $\psi$ if and only if $x_i \in Fix_{\phi_i}$ for all $i$. Consequently,  $F_{\psi'} = \prod\limits_{i=1}^n F_{\phi_i}$.  Observe that if $\phi_i$ is not a transposition then it moves at least three vertices, say $x$, $y$ and $z$ in $G$. In particular,  $\psi$ does not fix any vertex of the form $(x_1, x_2, \hdots, g, \hdots, x_n)$ , where $g \in \{x,y,z\}$ and appears in the $i^{th}$ position.  Thus, it follows that
$$F_{\psi} \leq |C_1|m^{n-2}(m-3).$$  If $\phi_i$ is a transposition for some $i>1$ then it is easy to see that  $F_{\psi} \leq (|C_1| -3)m^{n-1}<(|C_1| -2)m^{n-1}$. Finally, if $\phi_1$ is a transposition, then again $F \le (|C_1| -2)m^{n-1}.$ This proves the claim.
 
Setting $F = (|C_1| -2)m^{n-1}$ , $T = |C_1|m^{n-1} $ in  equation (\ref{eqnweak}) gives us
$$\bE(N) \leq \frac{ n!|Aut(G)|^n}{2^{m^{n-1}}} + 1.$$
It is a simple calculation to see that the first term in the above expression is less than $1$ for all $m\ge 3$ and $n\ge 4$. This completes the proof.
\end{proof} 
 
\begin{cor}
$\chi_D(K_r^{\times n}) = r+1$ for $n\ge 4$, and $r\ge 3$.  
\end{cor}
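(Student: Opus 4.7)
I plan to prove $\chi_D(K_r^{\times n}) = r+1$ by establishing the upper and lower bounds separately. The graph $K_r$ satisfies the structural hypotheses of Theorem \ref{thmweak}: it is connected, non-bipartite for $r \geq 3$, $R$-thin (distinct vertices of a complete graph have distinct neighborhoods), and prime with respect to the weak product. However the color-class hypothesis fails: in any proper $\chi(K_r)$-coloring every color class is a singleton $\{v\}$, whose pointwise stabilizer in $Aut(K_r) = S_r$ is $S_{r-1} \neq \{I\}$ for $r \geq 3$. Hence I would adapt the theorem's proof rather than invoke it directly.

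For the upper bound, fix the proper $r$-coloring of $K_r^{\times n}$ with classes $C'_i := \{v_i\} \times V(K_r)^{n-1}$. By Theorem \ref{autweak}, the subgroup $\G \leq Aut(K_r^{\times n})$ preserving each $C'_i$ setwise consists of those $\psi = (\phi_1, \ldots, \phi_n; \pi)$ with $\pi(1)=1$ and $\phi_1 = I$; thus $\G \cong S_r \wr S_{n-1}$, whose smallest prime divisor is $2$. Following Lemma \ref{motionlemma} I would randomly recolor each vertex of $C'_1$ with a color in $\{1, r+1\}$, independently and uniformly. A case analysis on the cycle structure of $\pi|_{\{2,\ldots,n\}}$ and on the supports of the $\phi_i$'s gives the worst-case bound $F_\psi(C'_1) \leq (r-2)r^{n-2}$ for non-trivial $\psi \in \G$: if $\pi|_{\{2,\ldots,n\}} = I$ then some $\phi_i \neq I$ fixes at most $r-2$ elements of $V(K_r)$, and if $\pi|_{\{2,\ldots,n\}} \neq I$ has $t \leq n-2$ cycles then a consistency analysis yields at most $r^t \leq r^{n-2} \leq (r-2)r^{n-2}$ fixed points. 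In border cases like $(n,r)=(4,3)$ this uniform bound alone does not suffice to conclude $\bE(N) < 2$; instead I would stratify the sum by the order of $\psi$, noting that involutions provide the dominant terms (each at most $2^{-r^{n-2}}$), while elements of order $k \geq 3$ have at most $F_\psi + |C'_1|/k$ orbits and hence each contributes at most $2^{F_\psi - |C'_1|(1 - 1/k)}$, which is vanishingly small.

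For the lower bound $\chi_D(K_r^{\times n}) \geq r+1$, every proper $r$-coloring partitions the vertex set into $r$ maximum independent sets of size $r^{n-1}$. For the standard slice colorings $c(x) = x_j$, any non-trivial permutation in $S_r \wr S_n$ acting on a coordinate $\neq j$ preserves the coloring; I would argue by an analysis of the structure of maximum independent sets in $K_r^{\times n}$ that this phenomenon persists for any proper $r$-coloring (including non-slice ones that may appear for $n \geq 3$), so no proper $r$-coloring is distinguishing. The main obstacle is the order-type stratification in the upper-bound estimate, which is essential since a purely worst-case fixed-point bound is insufficient for the small-parameter cases.
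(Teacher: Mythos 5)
You are right that Theorem \ref{thmweak} cannot be invoked verbatim here: in any proper $r$-coloring of $K_r$ every color class is a singleton, whose pointwise stabilizer in $Aut(K_r)=S_r$ is $S_{r-1}\neq\{I\}$ for $r\ge 3$, so the color-class hypothesis fails (and the inequality $|C_1|m^{t-1}\le(|C_1|-2)m^{n-1}$ in that proof also implicitly needs $|C_1|\ge 3$). The paper's own proof simply cites the theorem, so your decision to rerun the argument is sound. Your identification of the subgroup preserving all the $C'_i$ as $S_r\wr S_{n-1}$ (forcing $\pi(1)=1$ and $\phi_1=I$) and the bound $F_\psi\le (r-2)r^{n-2}$ are correct; note that it is precisely the requirement that $\psi$ fix the classes $C'_i$ for $i\ge 2$ that replaces the failed hypothesis in ruling out nontrivial $\psi$ fixing $C'_1$ pointwise.

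Two gaps remain. The serious one is the lower bound. Your plan rests on the assertion that every proper $r$-coloring of $K_r^{\times n}$ is a slice coloring $c(x)=f(x_j)$, which you propose to extract from ``an analysis of the structure of maximum independent sets.'' That assertion is exactly the theorem of Greenwell and Lov\'asz \cite{greenwell} that the paper cites (all proper $r$-colorings of $K_r^{\times n}$ are induced by colorings of a factor; equivalently, the maximum independent sets are the dictatorships $\{x:x_j=a\}$). This is a genuinely nontrivial result, and without proving or citing it your lower bound is not established. The second, more minor, gap is in your repair of the border cases of the upper bound: the claim that an element of order $k\ge 3$ has at most $F_\psi+|C'_1|/k$ orbits is false for composite $k$ (an element of order $4$ or $6$ may have many $2$-orbits on $C'_1$), so the stratification by order does not work as stated. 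The correct repair is to stratify by the value of $F_\psi$ rather than by order: the elements attaining $F_\psi=(r-2)r^{n-2}$ are exactly the $(n-1)\binom{r}{2}$ single-coordinate transpositions together with the $(n-1)(n-2)r!/2$ coordinate-swaps whose cycle product is the identity, all of which are involutions contributing $2^{-r^{n-2}}$ each, while every other nontrivial element has $F_\psi\le (r-2)^2r^{n-3}$ and contributes far less; summing these does give $\bE(N)<2$ for $(n,r)=(4,3)$ and $(4,4)$, but this computation needs to be carried out explicitly.
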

\begin{proof} First note that for $r\ge 3$,  $K_r$ is prime, non-bipartite, and $R-thin$. Hence by theorem \ref{thmweak} it follows that $\chi_D(K_r^{\times n})\le r+1$. A result of Greenwell and Lov\'asz (see  \cite{greenwell}) tells us that all proper $r-$colorings for $K_r^{\times n}$ are induced by colorings of the factors $K_r$. In particular, it implies that $\chi_D(K_r^{\times n})>r$.   
\end{proof}

\section{$\chi_D(G)$ versus $|Aut(G)|$}
As indicated in the introduction, one aspect of the problem of the distinguishing chromatic number of particular interest is the contrasting behavior of the distinguishing chromatic number vis-\'a-vis the size of the automorphism group. Our sense of contrast here is to describe  the size of the automorphism group as a function of the order of the graph. 

We first show examples of graphs that admit  `small' automorphism groups, and yet have $\chi_D(G)>\chi(G)$ and with arbitrarily large values of $\chi(G)$. To describe these 
graphs,  let $q\ge 3$ be prime and suppose $S \subset \bF_q$ is a subset of size $\frac{q-1}{2}$. The graph $G_S$ is defined as follows:  $V(G_S) = \bF_q^2$ and vertices $u =(u_1,u_2)$, $v= (v_1,v_2)$ are adjacent if and only if $v_1\neq u_1$ and $(v_2-u_2)(v_1-u_1)^{-1} \in S$. We denote $(v_2-u_2)(v_1-u_1)^{-1}$ by $s(u,v)$. Observe that, for each $\alpha \in S$ and $\beta\in\bF_q$, the set $l_{\alpha}^{\beta}:=\{(\beta+x, \beta+x\alpha): x\in\bF_q\}$ is a clique of size $q$, so $\chi(G_S)\geq q$. We shall denote the independent sets \footnote{These are independent in $G_S$ since $\infty\notin\bF_q$.} $\{(\beta, x+\beta): x\in\bF_q\}$ by $l^{\beta}_{\infty}$. Similarly, if $\alpha \notin S$ the set $l_{\alpha}^{\beta}$ is an independent set of size $q$, the collection $\{l_{\alpha}^{\beta}: \beta \in \bF_q\}$ describes a proper $q$-coloring of $G_S$,  hence $\chi(G_S)= q$.  We shall call the sets $l_{\alpha}^{\beta}$ as \textit{lines} in what follows.
 
 \begin{thm}
  $\chi_D(G_S) > q$.  
 \end{thm}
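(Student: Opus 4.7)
The strategy is to show that every proper $q$-coloring $c$ of $G_S$ admits a non-trivial color-preserving automorphism. I would exhibit such an automorphism as a non-trivial translation $\tau_t : v \mapsto v+t$ of $\bF_q^2$; note that $\tau_t \in \mathrm{Aut}(G_S)$ since adjacency in $G_S$ depends only on the slope $s(u,v)$, which is translation-invariant.

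First I would observe that for any fixed $\alpha \in S$, the $q$ lines $\{l_\alpha^\beta : \beta \in \bF_q\}$ are $q$-cliques partitioning $V(G_S) = \bF_q^2$. Hence each color class of a proper $q$-coloring must meet each such clique in exactly one vertex, forcing it to have size exactly $q$---so every color class is a maximum independent set of $G_S$.

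The core structural claim is that every maximum independent set in $G_S$ is itself a line $l_{\alpha_0}^{\beta_0}$ for some $\alpha_0 \in (\bF_q \setminus S) \cup \{\infty\}$. Since any independent set of size $q$ has all pairwise slopes in the ``allowed'' set $(\bF_q \setminus S) \cup \{\infty\}$, which has exactly $\frac{q+3}{2}$ elements, Blokhuis' direction theorem (for $q$ prime) applies: a $q$-element subset of $\bF_q^2$ either lies on a line or determines at least $\frac{q+3}{2}$ directions. Thus each color class is either a line of an allowed direction, or is a ``direction-minimal'' set determining exactly the $\frac{q+3}{2}$ allowed directions. To rule out the latter, I would combine the known classification of direction-minimal sets (due to Ball, Sz\H{o}nyi and others) with the strong constraint that the remaining vertex set $\bF_q^2$ must partition into $q-1$ further maximum independent sets disjoint from this one---the rigidity of direction-minimal sets is incompatible with such a partition.

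Once each color class is known to be a line, two disjoint color classes must be parallel, since lines of distinct directions in $\bF_q^2$ always meet in a point. Hence the $q$ color classes are precisely $\{l_{\alpha_0}^\beta : \beta \in \bF_q\}$ for some common direction $\alpha_0$. The translation $\tau_t$ with $t = (1, \alpha_0)$ (or $(0, 1)$ when $\alpha_0 = \infty$) is then a non-trivial automorphism of $G_S$ which sends each such line to itself, hence fixes every color class. This shows $c$ is not distinguishing, and $\chi_D(G_S) > q$.

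The main obstacle is the structural step, specifically ruling out direction-minimal non-line maximum independent sets in the tight equality case of Blokhuis' bound. For very small $q$---most notably $q = 3$, where $G_S$ is simply the disjoint union of three triangles and non-line maximum independent $3$-sets (the ``right triangles'' like $\{(0,0),(1,0),(0,1)\}$) genuinely exist---a direct argument replaces this step: any proper $3$-coloring induces a permutation of $[3]$ on each triangle, and swapping any two triangles with the appropriate conjugating relabeling of their vertices always yields a non-trivial color-preserving automorphism.
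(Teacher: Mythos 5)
Your proposal follows the paper's argument almost step for step: the clique partition $\{l_\alpha^\beta : \beta \in \bF_q\}$ for a fixed $\alpha \in S$ forces every color class of a proper $q$-coloring to have exactly $q$ vertices; the R\'edei-type direction theorem for $q$ prime forces each class to be a line in a non-$S$ direction; disjointness forces the $q$ lines to be parallel; and a translation in the common direction then fixes every class. The paper uses the very same translation (written there as $\phi_\alpha(x,y)=(x+1,y+\alpha)$, resp.\ $\phi_\infty$) and the same $\frac{q+3}{2}$ direction bound, attributed to Lov\'asz and Schrijver rather than to Blokhuis.

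The one place you diverge is the equality case of the direction bound, and there your instincts are good but your argument is incomplete. The paper disposes of a non-line class $C_i$ by noting $|S(C_i)| \geq \frac{q+3}{2}$ and $|S| = \frac{q-1}{2}$ and concluding $S(C_i) \cap S \neq \emptyset$; since the total number of directions is $q+1 = \frac{q+3}{2} + \frac{q-1}{2}$, this count is exactly tight, and a priori a direction-minimal non-line set whose $\frac{q+3}{2}$ directions are precisely $(\bF_q\setminus S)\cup\{\infty\}$ would slip through. You correctly identify this as the crux, but ``the rigidity of direction-minimal sets is incompatible with such a partition'' is a statement of intent, not a proof: as written this step is a genuine gap in your proposal, and it is the very step the paper dispatches in one terse line. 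To close it you would need to actually run the Lov\'asz--Schrijver classification of the extremal sets against the partition constraint, or otherwise exclude the tight case. On the positive side, your observation about $q=3$ is a real catch: there $G_S$ is three disjoint triangles, maximum independent sets need not be lines, the structural claim fails outright, and a separate (easy) argument such as your triangle-swapping one is genuinely required, even though the paper's statement includes $q=3$ without special treatment.
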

  \begin{proof}
Let $C= \{C_1, C_2, \hdots, C_q\}$ be a proper $q-$coloring of $G_S$. We claim that each $C_i$ is a line, i.e., for each $1\le i\le q$ we have $C_i=l_{\alpha}^{\beta}$ for some $\alpha\notin S, \beta\in\bF_q$.

 Observe that for $\alpha \in S$, the collection $ \C=\{l_{\alpha}^{\beta}|\beta\in\bF_q\}$ partitions the vertex set of $G_S$ into cliques of size $q$. Therefore in any proper $q$-coloring of $G_S$ each color class contains exactly $q$ vertices. By a result by  Lov\'{a}sz and  Schrijver \cite{lovas},  for a prime number $q$ if $X\subset AG(2,q)$ such that $|X| = q$ and $X$ is not a line then the set  
 $S(X)=\{s(x,y)|x\ne y, x,y\in X\}$ \textrm{\ has\ size\ at\ least\ }$\frac{q+3}{2}.$
 If a color class $C_i$ is not a line then $|S(C_i)|\ge\frac{(q+3)}{2}$ and since $|S| = \frac{q-1}{2}$ this implies that $S(C_i)\cap S\ne\emptyset$. But then this contradicts that $C_i$ is independent in $G_S$. 

In particular, any proper $q$-coloring $C$ of $G_S$ must be a partition of the form $\{l_{\alpha}^{\beta}:\beta\in\bF_q\}$ with $\alpha\in(\bF_q\cup\{\infty\})\setminus S$. Then the map
\begin{eqnarray*} \phi_{\alpha}(x,y) &=& (x+1,y+\alpha)\hspace{0.5cm}\textrm{\ if\ }\alpha\neq\infty,\\ \phi_{\infty}(x,y)&=&(x,y+1)\end{eqnarray*} is a nontrivial automorphism that fixes each color class of $C$. This establishes that $\chi_D(G_S) > q$.
\end{proof}
Now, we shall show that for a suitable choice of $S$,  $G_S$ has a relatively small automorphism group, and for such $S$, we shall also show that $\chi_D(G_S)=q+1$. Our choice of subset $S$ shall be a uniformly  random subset of $\bF_q$. 

Observe that if $\phi \in Aut(G_S)$ then since maximum cliques (respectively, maximum independent sets) are mapped into maximum cliques (resp. maximum independent sets), it follows that $\phi$ is a bijective map on $\bF^2_q$ which maps affine lines  into affine lines in $AG(2,q)$ (as a consequence of \cite{lovas}). Hence, it follows that 
$Aut(G_S) \subset AGL(2,q)$ (see \cite{huges}). In other words, any $\phi \in Aut(G_S)$ can be written as $A + \bar{b}$ for some $A \in Aut_0(G_S)$ and $\bar{b} (=\phi(0,0)) \in \bF^2_q$, where $Aut_0(G_S)\subset Aut(G_S)$ is the subgroup of automorphisms which fix the vertex $(0,0) \in V(G_S)$.

\begin{thm}\label{autgs}
Suppose $S$ is picked uniformly at random from the set of all $\frac{q-1}{2}$ subsets of $\bF_q$. Then asymptotically almost surely, $Aut(G_S) = \{ \lambda I + \bar{b}: \lambda \in \bF_q^*,\quad \bar{b} \in V(G_S)\}$. Consequently, $|Aut(G_S)| = q^2(q-1)$ asymptotically almost surely.
\end{thm}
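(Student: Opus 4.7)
The plan is to reduce the question to an orbit-counting problem in $PGL(2,q)$. Since translations $\bar x \mapsto \bar x + \bar b$ preserve every slope $s(u,v)$, the translation subgroup $T$ of order $q^2$ lies in $Aut(G_S)$, and it suffices to determine the point-stabilizer $Aut_0(G_S)$ of $(0,0)$. As noted just before the theorem, any $\phi \in Aut_0(G_S)$ is a linear map $A \in GL(2,q)$; we must therefore show that asymptotically almost surely the only such $A$ inducing an automorphism of $G_S$ are the scalar matrices $\lambda I$ for $\lambda \in \bF_q^*$. These scalars clearly lie in $Aut_0(G_S)$ (they preserve every slope), yielding the $(q-1)$-element subgroup asserted by the theorem, and the count $|Aut(G_S)| = q^2(q-1)$ will then follow from $Aut(G_S) = T \cdot Aut_0(G_S)$.

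A matrix $A = \begin{pmatrix} a & b \\ c & d \end{pmatrix}$ induces the M\"obius transformation
\[\sigma_A(\alpha) \;=\; \frac{c + d\alpha}{a + b\alpha}\]
on $\bP^1(\bF_q) = \bF_q \cup \{\infty\}$, and one checks directly that $A \in Aut_0(G_S)$ if and only if $\sigma_A(S) = S$ as subsets of $\bP^1(\bF_q)$; the complementary set $(\bF_q \setminus S)\cup\{\infty\}$ is then automatically preserved because $\sigma_A$ is a bijection. A short calculation shows that the kernel of $A \mapsto \sigma_A$ is precisely the scalar matrices, so the task reduces to showing that a.a.s.\ the only $\sigma \in PGL(2,q)$ satisfying $\sigma(S) = S$ is the identity.

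Fix a non-identity $\sigma \in PGL(2,q)$ and let $o(\sigma)$ denote the number of orbits of $\sigma$ on $\bP^1(\bF_q)$. Since $\infty \notin S$ and $S$ must be a union of $\sigma$-orbits, $S$ is determined by a subset of the $o(\sigma) - 1$ orbits disjoint from the orbit of $\infty$, so the number of admissible $S$ is at most $2^{o(\sigma)-1}$. A non-identity element of $PGL(2,q)$ has at most two fixed points on $\bP^1(\bF_q)$ and all remaining points lie in orbits of size $\ge 2$, giving $o(\sigma) \le (q+3)/2$. Using Stirling's estimate $\binom{q}{(q-1)/2} = \Omega\!\left(2^{q}/\sqrt{q}\right)$,
\[\bP[\sigma(S) = S] \;\le\; \frac{2^{(q+1)/2}}{\binom{q}{(q-1)/2}} \;=\; O\!\left(\sqrt{q}\cdot 2^{-q/2}\right).\]
A union bound over the $q^3 - q - 1$ non-identity elements of $PGL(2,q)$ then gives
\[\bP\bigl[\,\exists\,\sigma \ne I:\ \sigma(S) = S\,\bigr] \;=\; O\!\left(q^{7/2}\cdot 2^{-q/2}\right) \;\longrightarrow\; 0,\]
which is the desired conclusion.

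The main technical hurdle is the bound $o(\sigma) \le (q+3)/2$, attained only by involutions with two fixed points on $\bP^1(\bF_q)$; while this follows from standard facts about $PGL(2,q)$, a sharper estimate refined by the conjugacy class of $\sigma$ (splitting into elements of order dividing $p$, $q-1$, or $q+1$) would yield tighter probabilities, but is not needed since the crude bound above already pushes the failure probability below $1$ for all sufficiently large primes $q$.
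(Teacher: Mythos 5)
Your argument is correct, and while it follows the same overall strategy as the paper (reduce to the origin-stabilizer $Aut_0(G_S)\subset GL(2,q)$, pass to the induced M\"obius action on slopes in $\bP^1(\bF_q)$ with the scalar matrices as kernel, and show that a.a.s.\ no non-identity element of $PGL(2,q)$ stabilizes $S$), the key counting step is genuinely different. The paper fixes a non-scalar $A$ and observes that $f_A^{(k)}(\mu)=\mu$ is a quadratic in $\mu$, so $f_A$ has at most two orbits of each size $k$; an invariant $S$ then yields a partition of $(q-1)/2$ with at most two parts of any size, and the number of such $S$ is controlled via the Hardy--Ramanujan asymptotics $p(t)=e^{O(\sqrt t)}$, which is negligible against $\binom{q}{(q-1)/2}=\Theta(2^q/\sqrt q)$. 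You instead use the more elementary fact that a non-identity element of $PGL(2,q)$ fixes at most two points of $\bP^1(\bF_q)$, hence has at most $(q+3)/2$ orbits, hence at most $2^{(q+1)/2}$ invariant subsets avoiding the orbit of $\infty$; this gives the weaker per-element bound $O(\sqrt q\,2^{-q/2})$, which still survives the union bound over the $O(q^3)$ elements of $PGL(2,q)$. What your route buys is that it needs no partition-function asymptotics and sidesteps a bookkeeping subtlety in the paper's count (when a part size occurs with multiplicity one, either of the two equal-size orbits may be chosen, so the number of invariant sets realizing a given partition can exceed the stated factor of $2$ --- harmless for the paper's conclusion, but your bound avoids the issue entirely); what the paper's finer orbit-size analysis buys is a much smaller failure probability, $e^{O(\sqrt q)}2^{-q}$ versus your $2^{-q/2}$, which is irrelevant for the a.a.s.\ statement. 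Both proofs correctly conclude with $Aut(G_S)=T\cdot Aut_0(G_S)$ and $|Aut(G_S)|=q^2(q-1)$.
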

Here by the phrase asymptotically almost surely we mean that  the probability that $Aut(G_S) = \{ \lambda I + \bar{b}: \lambda \in \bF_q^*,\bar{b} \in V(G_S)\}$ approaches $1$ as $q\to\infty$.
\begin{proof}
Since we have already observed that $Aut(G_S)\subset AGL(2,q)$, every $\phi\in Aut(G_S)$ can be written in the form   $\phi(x,y) = A(x,y) + (b_1, b_2)$ for some $b_1, b_2 \in \bF_q$ and $A \in Aut_0(G_S)$. Here, $A\in GL(2,q)$ corresponds to a matrix
 $ \begin{pmatrix}
a & b \\ c & d
\end{pmatrix} $  
for $a,b,c,d \in \bF_q$ with  $ad-bc\ne 0$.

We introduce the symbol $\infty$ and adopt the convention that $a+\infty=\infty, a\cdot\infty=\infty$ for $a\ne 0$, and $\frac{a}{0}=\infty$ for $a\ne 0$. For $\phi \in Aut(G_S)$, define a map $f_\phi:\bF_q\cup\{\infty\}\to\bF_q\cup\{\infty\}$ as follows:
\begin{eqnarray*}
f_{\phi}(\alpha)&= &\frac{d \alpha +c}{a+b\alpha},\hspace{0.51cm}\textrm{\ if\ }\alpha\ne-\frac{a}{b},\\
f_{\phi}\left(\frac{-a}{b}\right)&=& \infty,\\
f_{\phi}(\infty)&=&\frac{d}{b}.\end{eqnarray*}

Observe that $f_\phi$ is trivial if and only if $b=c=0$ and $a=d$. 

Let $x = (x_1, x_2), y = (y_1, y_2)$ be two adjacent vertices in $G_S$.  Since $\phi(x)$ is adjacent to $\phi(y)$, we have $$s(\phi(x), \phi(y)) = \frac{ c(y_1 - x_1) + d(y_2 - x_2)}{a(y_1 - x_1) + b(y_2 - x_2)}=  \frac{  d\cdot s(x,y)+c }{ b\cdot s(x,y)+a}.$$ Observe that $y_1 - x_1$ is nonzero since $s(x, y) \in S$.  Therefore we have, 
\begin{equation}\label{eqnf}
s(\phi(x), \phi(y))  = f_{\phi}(s(x,y)).
\end{equation}
Also note that if $\phi\ne \lambda I$ then for $\mu \in \bF_q$ and $k \in \bN$, setting 
$$f^{(k)}_{\phi}(\mu):=\underbrace{f_{\phi}\circ f_{\phi}\circ\cdots\circ f_{\phi}}_{k-\textrm{fold}}(\mu) = \mu$$ yields a quadratic equation in $\mu$, so there are at most two values of $\mu\in\bF_q$ satisfying $f^{(k)}_{\phi}(\mu)=\mu$. In other words, for each positive integer $k$, the map $f_{\phi}$ admits at most two orbits of size $k$. Moreover if $A \in Aut(G_S)$ then by equation \ref{eqnf}, $f_A(S) = S.$

Consider the event $E$: There exist a nontrivial automorphism $A \in Aut_0(G_S)$ such that $f_{A}$ is not the identity map.
 Observe that $E$ is the union of the events $E_A$ where the event $E_A$ is described as follows: For any $A\in GL(2,q)$, $A\neq \lambda I$ for some $\lambda\ne 0$, $S$ is the union of $f_{A}$ orbits. Recall that  $f_{A}$ is not the trivial map if and only if $A\ne\lambda I$ for some $\lambda\ne 0$.  
 
 By a favorable automorphism, we shall mean an automorphism  $A\in Aut_0(G_S)$, $A\ne\lambda I$ such that $S$ is union of $f_A$ orbits.  By the preceding discussion, it follows that a favorable automorphism of $G_S$ induces a partition $\Lambda$ of $\frac{q-1}{2}$ in which there are at most two parts of any size. Therefore the number of favorable automorphisms is at most twice the number of integer partition of $\frac{q-1}{2}$ in which there are at most two parts of any size which is clearly less than  $2p(\frac{q-1}{2})$, where $p(n)$ denotes the partition function. By the asymptotics of the partition function of Hardy-Ramanujan (see \cite{rama}),  $$p(t)\sim\frac{1}{4t \sqrt{3}}\exp\Bigg(\pi \sqrt{\frac{2t}{3}}\Bigg),$$ where $t=(q-1)/2$. So in particular,  for any $A \in Aut_0(G_S)$ the probability that $f_A$ is nontrivial is less than $p(t){q\choose t}^{-1}$. Consequently,  $$\bP(E) \leq (q^2 -1)(q^2-q) \frac{2p(t)}{{q\choose t}}\to 0\textrm{\ as\ } q \rightarrow \infty.$$ Hence asymptotically almost surely,  every $S \subset \bF_q$ satisfies $Aut_0(G_S) = \{\lambda I : \lambda \in \bF_q \}$. The second statement follows trivially from this conclusion.        
\end{proof}
In particular, let $S$ be a subset of $\bF_q$ such that $Aut_0(G_S) = \{\lambda I : \lambda \in \bF_q \}$. For such $S$, the distinguishing chromatic number of $G_S$ is greater than its chromatic number.
\begin{thm}Let $S\subset\bF_q$ be a set of size $\frac{q-1}{2}$ such that $Aut_0(G_S) = \{\lambda I : \lambda \in \bF_q \}$. Then 
$\chi_D(G_S) = q+1$.
\end{thm}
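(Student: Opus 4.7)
\emph{Proof plan.} The preceding theorem already yields $\chi_D(G_S) > q$, so my task reduces to exhibiting a proper $(q+1)$-coloring that is also distinguishing. The hypothesis $Aut_0(G_S) = \{\lambda I : \lambda \in \bF_q^*\}$, combined with the observation (preceding Theorem \ref{autgs}) that $Aut(G_S) \subset AGL(2,q)$, gives a completely explicit description of the automorphism group: every $\phi \in Aut(G_S)$ has the form $\phi(v) = \lambda v + (b_1, b_2)$ with $\lambda \in \bF_q^*$ and $(b_1, b_2) \in \bF_q^2$. This rigid structure is the only algebraic input I need.

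My plan is to start from the natural proper $q$-coloring of $G_S$ given by the vertical lines $l_\infty^\beta = \{(\beta, y) : y \in \bF_q\}$, $\beta \in \bF_q$, and then break all remaining symmetries by splitting just one of its color classes. Concretely, I would assign color $\beta + 1$ to $l_\infty^\beta$ for each $\beta \in \{1, \ldots, q-1\}$, color $1$ to the singleton $A := \{(0,0)\}$, and a fresh color $q+1$ to $B := l_\infty^0 \setminus A$. Since every $l_\infty^\beta$ is independent in $G_S$ and both $A$, $B$ sit inside $l_\infty^0$, the resulting $(q+1)$-coloring is proper.

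To verify that it is distinguishing, suppose $\phi(v) = \lambda v + (b_1, b_2)$ fixes every color class setwise. A short calculation gives $\phi(l_\infty^\beta) = l_\infty^{\lambda \beta + b_1}$; since $l_\infty^\beta$ is its own color class for each $\beta \in \{1, \ldots, q-1\}$, we must have $\lambda \beta + b_1 = \beta$ for at least two distinct values of $\beta$, which forces $\lambda = 1$ and $b_1 = 0$. Hence $\phi$ is a vertical translation $(x, y) \mapsto (x, y + b_2)$, and since it also fixes the singleton $A = \{(0,0)\}$, I conclude $b_2 = 0$ and $\phi = I$. Combined with the strict lower bound from the preceding theorem, this yields $\chi_D(G_S) = q+1$. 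The one subtlety worth flagging as an apparent obstacle is that a direct application of Lemma \ref{motionlemma} with $C_1 = l_\infty^0$ and $t = 2$ is too weak: the dilations $v \mapsto \lambda v$ contribute $1 + (q-1)/\mathrm{ord}(\lambda)$ orbits on $C_1$, which pushes $\bE(N)$ past the prime $2 \mid |\G|$ for small $q$. The hand-tailored singleton split above circumvents this and works uniformly in $q$.
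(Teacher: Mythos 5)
Your proof is correct and follows essentially the same route as the paper: both start from a proper $q$-coloring by a parallel class of lines in a non-edge direction (you use the vertical direction $\infty$, the paper a slope $\gamma\notin S\cup\{1\}$) and then isolate $(0,0)$ into its own singleton color class, after which the affine form $\phi(v)=\lambda v+\bar b$ of every automorphism forces $\phi=I$. The only cosmetic difference is the order in which you extract $\lambda=1$ and $\bar b=0$ (you use the $q-1$ unsplit classes first, the paper uses the fixed point $(0,0)$ first).
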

\begin{proof}
For $1 \neq \gamma \notin S$, consider the coloring of $G_S$ described by the color classes $\{l_{\gamma}^{\beta}: \beta \in \bF_q\}$. Assign the color $q+1$ to only the vertex $(0,0) \in V(G_S)$. This forms a $q+1$ coloring of $G_S$which is  obviously a proper coloring. To show that this is distinguishing,  let $\phi$ be a color fixing nontrivial automorphism of $G_S$. By theorem \ref{autgs}, $\phi$ maps $(x,y)$ to $(ax+b_1, ay+b_2)$ for some $a,b_1,b_2 \in \bF_q$. Since $\phi$ fixes $(0,0)$ we have $b_1=b_2=0$ and $a \neq 1$. This implies $\phi = aI$ and hence it is not color fixing; indeed $\phi$ maps $(1,1)$ to $(a,a)$ and $(a,a) \notin l_{\gamma}^{1}$. 
\end{proof}
 
 Our second result in this section describes a family of graphs with very large automorphism groups - much larger than exponential in $|V(G)|$, but for which $\chi_D(G)=\chi(G)$. As was proven in \cite{kneser}, we already know that the Kneser graphs $K(n,r)$ with $r\ge 3$ satisfy the same. However, one might also expect that in such cases, distinguishing proper colorings are perhaps rare, or at the very least, that there do exist minimal proper, non-distinguishing colorings of $G$.  It turns out that even this is not true.

\begin{thm}Let $\overline{K(n,r)}$  denote the complement of the Kneser graph, i.e., the vertices of $\overline{K(n,r)}$ correspond to $r$ element subsets of $[n]$ and two vertices are adjacent if and only if their intersection is non-empty. Then for $n\ge 2r$ and $r\ge 3$
$\chi_D(\overline{K(n,r)})= \chi(\overline{K(n,r)})$. Moreover, every proper coloring of $\overline{K(n,r)}$ is in fact distinguishing.
\end{thm}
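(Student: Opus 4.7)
The plan is to use the fact that $Aut(\overline{K(n,r)}) = S_n$ (this is standard for $n > 2r$, since the Kneser graph and its complement share the same automorphism group), and then exploit the rigid structure of independent sets in $\overline{K(n,r)}$: each color class of any proper coloring must be a family of pairwise disjoint $r$-subsets (what one might call a ``partial matching''). So if $C_1, \ldots, C_k$ is any proper coloring and $\sigma \in S_n$ fixes each $C_i$ setwise, then for every $r$-set $A \in C_i$ we must have $\sigma(A) \in C_i$, and since distinct members of $C_i$ are disjoint, either $\sigma(A) = A$ or $\sigma(A) \cap A = \emptyset$. The whole proof therefore reduces to the combinatorial claim: for $n \ge 2r$, $r \ge 3$, any non-trivial $\sigma \in S_n$ admits some $r$-set $A$ with $\sigma(A) \neq A$ and $\sigma(A) \cap A \neq \emptyset$.

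I would verify this claim by a short case analysis on the cycle structure of $\sigma$. Fix a non-fixed point $i$ with $\sigma(i) = j \neq i$.

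\textbf{Case 1:} $\sigma$ has a fixed point $k$. Since $k \neq i$ and $k \neq j$ (the latter because $j$ lies in a non-trivial orbit), choose any $A$ of size $r$ containing $\{i,k\}$ and avoiding $j$; this is possible since $n \ge r+1$. Then $\sigma(A) \ni k,j$, so $k \in A \cap \sigma(A)$ yet $j \in \sigma(A) \setminus A$.

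\textbf{Case 2:} $\sigma$ has no fixed points but some cycle $(a_1 a_2 \cdots a_m)$ of length $m \ge 3$. Pick an $r$-set $A$ containing $\{a_1,a_m\}$ and avoiding $a_2$. Since $\sigma(a_m) = a_1 \in A$ we get $a_1 \in A \cap \sigma(A)$, while $a_2 \in \sigma(A) \setminus A$.

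\textbf{Case 3:} $\sigma$ is a fixed-point-free involution. Then $n$ is even, and since $n \ge 2r \ge 6$ there are at least three transpositions; pick two of them $(a_1 b_1), (a_2 b_2)$. Choose an $r$-set $A$ containing $\{a_1, a_2, b_2\}$ and avoiding $b_1$ (possible because $r \ge 3$ and $n - 4 \ge r - 3$). Then $a_2 = \sigma(b_2) \in A \cap \sigma(A)$, but $b_1 \in \sigma(A) \setminus A$.

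In each case the desired $A$ exists, yielding the contradiction that $\sigma$ cannot fix every color class. This proves that every proper coloring is distinguishing, and in particular a proper $\chi(\overline{K(n,r)})$-coloring is distinguishing, giving $\chi_D(\overline{K(n,r)}) = \chi(\overline{K(n,r)})$. The main subtlety I anticipate is Case 3 of the case analysis (fixed-point-free involutions), where the argument really does need both $r \ge 3$ and $n$ large enough to accommodate two full transpositions inside a single $r$-set while still omitting the required companion element; this is precisely where the hypotheses $r \ge 3$ and $n \ge 2r$ are used. The other mild point worth flagging is that $Aut(\overline{K(n,r)}) = S_n$ requires $n > 2r$ (for $n = 2r$ the complementation map $A \mapsto [n]\setminus A$ is an extra automorphism that fixes every color class of the unique minimum coloring), so the theorem is really meant in the range $n \ge 2r+1$.
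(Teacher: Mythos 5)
Your proof is correct, and it follows the same overall strategy as the paper's: identify $Aut(\overline{K(n,r)})$ with $S_n$, observe that every color class of a proper coloring is a family of pairwise disjoint $r$-sets, and conclude that a class-preserving $\sigma$ must send every $r$-set $A$ either to itself or to a set disjoint from $A$. Where you diverge is in how you refute this for nontrivial $\sigma$: you run a three-way case analysis on the cycle structure (a fixed point exists; a cycle of length $\ge 3$ exists; $\sigma$ is a fixed-point-free involution), producing in each case a single witness $A$ with $\sigma(A)\neq A$ and $\sigma(A)\cap A\neq\emptyset$. The paper instead gives a uniform two-step argument with no cases: taking $\sigma(1)=2$, the vertex $v_1=\{1,\dots,r\}$ must be fixed setwise (since $2\in\sigma(v_1)\cap v_1$), and then $v_2=\{1,3,\dots,r+1\}$ must be mapped to a disjoint set, which is impossible because $\sigma(3)\in\{1,\dots,r\}\setminus\{2\}\subseteq v_2$. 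The paper's argument is shorter and more elegant; yours is more transparent about exactly which cycle types force $r\ge 3$ (the fixed-point-free involutions, where $r=2$ genuinely fails). Your closing remark is also a genuine catch: at $n=2r$ the graph $\overline{K(2r,r)}$ is complete multipartite with parts $\{A,[n]\setminus A\}$, so by the Collins--Trenk characterization its distinguishing chromatic number equals $\binom{2r}{r}$, not $\chi$; the theorem (and the paper's appeal to $Aut(K(n,r))\cong S_n$) really requires $n\ge 2r+1$, a boundary case the paper's statement overlooks.
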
 
\begin{proof} 
First,  observe that since $Aut(K(n,r))\simeq S_n$ for $n\ge 2r$, the full automorphism group of $\overline{K(n,r)}$ is also $S_n$. 

Consider a proper coloring $c$ of $\overline{K(n,r)}$ into color classes $C_1,C_2,\ldots, C_t$. Note that for any two vertices $v_1,v_2$ in the same color class, $v_1\cap v_2=\emptyset$. If possible, let $\sigma\in S_n$ be a non-trivial automorphism which fixes $C_i$ for each $i$. Without loss of generality let $\sigma(1)=2.$  Observe that for the vertex $v_1 = (1,2,\hdots, r),$ its color class has no other vertex containing $1$ or $2,$ so  $\sigma$ maps $\{1,2,,\hdots, r\}$ to $\{1,2,\hdots, r\}.$ Again, with the vertex $v_2 = \{1,3,\hdots, r+1\}$, which is in color class $C_2\neq C_1$, $\sigma$ maps $v_2$ into $\{2, \sigma(3), \hdots, \sigma(r+1)\}\neq v_2,$ so $\sigma(v_2)\cap v_2=\emptyset$ by assumption.  However, since $\sigma(i)\in\{1,2,\ldots,r\}$ for each $3\le i\le r$ this yields a  contradiction.
\end{proof}

\section{Bipartite Graphs with large $\chi_D(G)$}\label{lgq}

In this section we describe a family of bipartite graphs whose distinguishing chromatic number is greater than $r+s$, for any $r,s\ge 2$. As we described in the introduction, the sense of non-triviality of these examples arises from a couple of factors. Our examples contain several copies of $K_{r,s}$ as induced subgraphs. That by itself does not imply that the distinguishing chromatic number is at least $r+s$ but it is suggestive. What makes these families nontrivial is the fact that the distinguishing chromatic number of these graphs is in fact $r+s+1$. 

Again, in order to describe these graphs, let $q\ge 5$ be a prime power, and let $\Pi:=(\Pa,\La)$ be a Desarguesian projective plane of order $q$. As is customary, we denote by $[r]$, the set $\{1,2\ldots,r\}$.  

The graph which we denote  $LG_q \otimes K_{r,s}$ has vertex set $V(LG_q \otimes K_{r,s})=( \Pa \times[r])\sqcup(\La\times [s]) $, and for $p\in\Pa, l\in\La, \textrm{\ and\ }(i,j)\in[r]\times [s]$ we have $(p,i)$ adjacent to $(l,j)$ if and only if $p\in l$. Another way to describe this graph goes as follows.  The weak product $LG_q\times K_{r,s}$ is bipartite and consists of two isomorphic bipartite components. The graph $LG_q \otimes K_{r,s}$ is one of the connected components.

 For each point $p$ there are $r$ copies of $p$ in the graph $LG_q \otimes K_{r,s}$; we call the set $\{(p,i)|i\in[r]\}$ the fiber of $p$, and denoted it by $F(p)$. Similarly we denote by $F(l)$, the set $F(l) = \{(l,i): i \in [s]\}$, and shall call this the fiber of $l$. Each vertex $(p,i)$ (resp. $(l,j)$) of $LG_q \otimes K_{r,s}$ has degree $r(q+1)$ (resp. $s(q+1)$).  
\begin{thm}
 $\chi_D(LG_q \otimes K_{r,s}) = r+s + 1$, where $ r,s \geq 2$ and $q \geq 5$ is a prime number.
\end{thm}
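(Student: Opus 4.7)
The plan is to establish both $\chi_D(LG_q \otimes K_{r,s}) \geq r+s+1$ and $\chi_D(LG_q \otimes K_{r,s}) \leq r+s+1$.

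For the lower bound, I exploit the fact that vertices within a fiber are twins: all $r$ vertices of $F(p)$ share the common neighborhood $\bigcup_{l \ni p} F(l)$, so arbitrary permutations within each fiber (and similarly within each $F(l)$) lie in $Aut(LG_q \otimes K_{r,s})$. Hence, in any distinguishing coloring, the sets $A_p$ and $B_l$ of colors used on $F(p)$ and $F(l)$ must satisfy $|A_p|=r$, $|B_l|=s$, and properness forces $A_p \cap B_l = \emptyset$ whenever $p \in l$. If the coloring uses only $r+s$ colors, then $A_p$ and $B_l$ partition $[r+s]$, giving $A_p = [r+s]\setminus B_l$. Since any two distinct points of $\Pi$ lie on a common line, this pins all the $A_p$ to a single set $A$ and all the $B_l$ to $B=[r+s]\setminus A$. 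I then lift any nontrivial $\phi^* \in P\Gamma L(3,q)$ (the bipartition-preserving subgroup of $Aut(LG_q)$) to a color-preserving automorphism $\phi$ of $LG_q \otimes K_{r,s}$ by sending, on each fiber, the unique $c$-colored vertex of $F(p)$ to the unique $c$-colored vertex of $F(\phi^*(p))$ (and analogously on line fibers). Since $P\Gamma L(3,q)$ is nontrivial, no proper $(r+s)$-coloring can be distinguishing, so $\chi_D \geq r+s+1$.

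For the upper bound, I invoke the earlier theorem that $\chi_D(LG_q) = 3$ for prime powers $q \geq 5$: its proof (via the variant of the Motion Lemma applied to the color class $\La$ in the natural 2-coloring) yields a partition $\La = \La_1 \sqcup \La_2$ for which the 3-coloring of $LG_q$ with color classes $(\Pa, \La_1, \La_2)$ is distinguishing; equivalently, the only $\phi^* \in P\Gamma L(3,q)$ setwise fixing $\La_1$ is the identity. I then define a proper $(r+s+1)$-coloring of $LG_q \otimes K_{r,s}$ by setting $(p,i) \mapsto i$ for $p \in \Pa, i \in [r]$; $(l,j) \mapsto r+j$ for $l \in \La_1, j \in [s]$; and for $l \in \La_2$, $(l,j) \mapsto r+j$ for $j<s$ while $(l,s)$ gets the fresh color $r+s+1$. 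Properness is immediate since point-fibers draw from $[r]$ and line-fibers from $\{r+1,\ldots,r+s+1\}$. For distinguishability, let $\phi$ be a color-preserving automorphism. Color class $1$ lies entirely inside $\Pa \times [r]$, so $\phi$ must preserve the bipartition and therefore induce some $\phi^* \in P\Gamma L(3,q)$. Color class $r+s+1$ equals $\{(l,s): l \in \La_2\}$, so $\phi^*(\La_2)=\La_2$ and hence $\phi^*(\La_1)=\La_1$, forcing $\phi^* = I$ by the choice of partition. Finally, each fiber $F(p)$ and $F(l)$ receives its full quota of distinct colors, so no nontrivial within-fiber permutation preserves the coloring, and $\phi = I$.

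The main obstacle I anticipate is the rigidity step in the lower bound: one must verify carefully that the forced equality $A_p = A$ across all $p$ (a consequence of projective-plane connectivity) really yields a genuinely nontrivial, well-defined color-preserving lift of $P\Gamma L(3,q)$, with the lifts on point-fibers and line-fibers fitting together across every incidence. A secondary point in the upper bound is ensuring the argument goes through uniformly in the case $r=s$: the fact that color class $1$ sits wholly in the $\Pa$-part of the bipartition prevents any color-preserving automorphism from swapping the two parts, even when a bipartition-swapping duality could otherwise be available.
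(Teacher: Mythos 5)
Your proposal is correct and follows essentially the same route as the paper: the lower bound via within-fiber twin swaps forcing $|A_p|=r$, $|B_l|=s$, disjointness, and the consequent lift of a nontrivial collineation to a color-preserving automorphism; the upper bound by importing the distinguishing $3$-coloring of $LG_q$ and using one extra color to break a fiber color class in two. The only (immaterial, dual) difference is that you split the last color on the line fibers according to $\La=\La_1\sqcup\La_2$, whereas the paper splits the $r$-th color on the point fibers according to $\Pa=\Pa_1\sqcup\Pa_2$; both rest on the same Motion-Lemma computation for $LG_q$.
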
 

\begin{proof} Firstly, we show that $\chi_D(LG_q \otimes K_{r,s}) > r+s $.

If possible,  let $C$ be an $(r+s)$-proper distinguishing coloring of $LG_q \otimes K_{r,s}$ and let $C_i, i\in [r+s]$ be the color classes of $C$ in $LG_q \otimes K_{r,s}$. We claim:\begin{itemize}
\item[1.] For each $p \in \Pa$, each vertex of $F(p)$ gets a distinct color. The same also holds for each $l\in \La$ and each vertex of $F(l)$.
\item[2.] If $\C_{\Pa}$ and $\C_{\La}$ denote the sets of colors on the vertices of  $\bigcup\limits_{p \in \Pa} F(p)$ and  $\bigcup\limits_{l \in \La} F(l)$ respectively, then $\C_{\Pa}\cap\C_{\La}=\emptyset$ and $|\C_{\Pa}|=r,|\C_{\La}|=s$. Consequently, for each $i$,  either $F(p)\cap C_i\ne\emptyset$ for each $p\in\Pa$ or $F(l)\cap C_i\ne\emptyset$ for each $l\in\La$.
\end{itemize} 
We shall first prove each of the claims made above.
\begin{itemize}
\item[1.] For $p\in\Pa$ suppose $F(p)$ contains two elements, say $(p,i)$ and $(p,j)$, with the same color. Consider the map $\phi$ that swaps $(p,i)$ with $(p,j)$ and fixes all other vertices. It is easy to see that $\phi$ is a graph automorphism which fixes each color class $C_i$ contradicting  the assumption that $C$ is distinguishing. The argument for the part regarding vertices in the fiber $F(l)$ is identical.  
\item[2.] Let $l\in\La$ and $p\in l$. By claim $1$ each vertex in $F(p)$ has a distinct color. Since $|F(p)| = r$ we may assume without loss of generality let $(p,i)$ gets color $i$ for $i \in [r]$. In that case, no vertex of $F(l)$ can be colored using any color in $[r]$. Furthermore, by the same reasoning as above, each vertex of $F(l)$ is colored using  a distinct color, so we may assume again that $(l,i)$ is colored $r+i$ for $i=1,2\ldots,s$. Since there is a unique line through any two points, no vertex of the form $(p',j)$ gets a color in $\mathop{\cup}\limits_{i=r+1}^{r+s}C_i$. Similarly, no vertex of the form $(l',j)$  belongs to $\mathop{\cup}\limits_{i=1}^{r}C_i$.  Therefore, all points and their fibers belongs to $\mathop{\cup}\limits_{i=1}^r C_i$ and all lines with their fibers belongs to $\mathop{\cup}\limits_{i=r+1}^{r+s} C_i$. 

From claims 1 and 2 above, we conclude that for each $p \in \Pa$, $C_i \cap F(p) \neq \emptyset$ for $i\in[r]$. Otherwise, since $|F(p)| = r$, there exist an $i\in [r]$ such that $|C_i \cap F(p) | \geq 2$, contradicting  claim 1. Similar arguments show that for each $l \in \La$, $C_{i+r} \cap F(l) \neq \emptyset$ for $i\in[s]$.  
\end{itemize}
 To show $C$ is not a distinguishing coloring we produce a nontrivial automorphism of $LG_q \otimes K_{r,s}$ which fixes each $C_i$ for $i=1,2,\ldots, r+s$.  We first set up some terminology. For $i\in[r]$, we call a vertex in the fiber of $p$ its $i^{th}$ vertex if its color is $i$ and shall denote it $p^i$. Similarly, we shall call a vertex in the fiber of $l$ its $i^{th}$ point if its color is $i+r$ and shall denote it by $l^i$.
 
Let $\psi\in Aut(LG_q)$ be a nontrivial automorphism such that $\psi(\Pa) = \Pa$ so that it also satisfies $\psi(\La)= \La$. Let $\sigma$ be defined on $V(LG_q \otimes K_{r,s})$ by $\sigma(v^i) = \psi(v)^i$ for $v\in \Pa\sqcup\La$.  It is clear that $\sigma$ is a color preserving map. Moreover $\sigma$ preserves adjacency in $LG_q \otimes K_{r,s}$; indeed, $v$ is adjacent to $w$ in $LG_q$ if and only if $F(v)\cup F(w)$ forms a $K_{r,s}$ as a subgraph of $LG_q \otimes K_{r,s}$ and $\psi \in Aut(LG_q)$. Therefore $\sigma$ is a nontrivial automorphism which fixes the color classes, thereby showing that $\chi_D(LG_q \otimes K_{r,s}) > r+s.$

We now claim that  $\chi_D(LG_q \otimes K_{r,s}) \leq r+s+1 $. For $1 \leq i \leq r-1$, assign the color $i$ to the points $\{(p,i): p\in \Pa\}$ and for $r+1 \leq j \leq r+s$ let $\{(l,j): l\in \La\}$ be colored $j$. Recall that $LG_q$ admits a distinguishing $3$-coloring in which every vertex of $\La$ is given the same color, and the point set $\Pa$ is partitioned into  $\Pa_1,\Pa_2$ that correspond to the other two color classes (theorem \ref{lg3}). We split the set $\{(p,r): p\in \Pa\}$ into $C_r:=\{(p,r)|p\in\Pa_1\}$ and $C_{r+s+1}:=\{(p,r): p\in\Pa_2\}$ and designate  these sets as color classes $r$ and $r+s+1$ respectively.  

It is easy to see that the above coloring is proper since adjacent vertices get different colors. To see that it is distinguishing,  let $\mu$ be a nontrivial automorphism which fixes each color class. Since $\mu$ fixes each color class as a set, and $\mu$ is nontrivial, in particular, $\mu$ fixes the set $\{(p,r):p\in\Pa\}$, and also fixes each set $\{(l,i): l\in\La\}$ for $r+1\le i\le r+s$, so in particular, $\mu$ induces a nontrivial automorphism, $\nu$, on $LG_q = C_r \cup C_{i+r}$ for each $i\in[s]$,  which is non-distinguishing. But this contradicts  theorem \ref{lg3},  and so we are through. 
 \end{proof}
\section{Concluding Remarks}
\begin{itemize}
\item It is possible to consider other Levi graphs arising out of other projective geometries (affine planes, incidence bipartite graphs of $1$-dimensional subspaces versus $k$ dimensional subspaces in an $n$ dimensional vector space for some $k$ etc). Many of our results and methods work in those contexts as well and it should be possible to prove similar results there as well,  as long as the full automorphism group is not substantially larger. For instance, in the case of the incidence graphs of $k$ sets versus $l$-sets of $[n]$, it is widely believed (see  \cite{godsil}, chapter 1) that in most cases, the full automorphism group of the generalized Johnson graphs is indeed $S_n$ though it is not known with certainty.
\item As stated earlier, we believe that $\chi_D(LG_4)=3$ though we haven't been able to show the same. Similarly, we believe $\chi_D(LG_3)=4$. One can, by tedious arguments considering several cases,  show that a monochromatic $3$-coloring of $LG_3$ is not a proper distinguishing coloring. For details on what a monochromatic coloring is, see the Appendix for related details.
\item We were able to show $\chi_D(K_r^{\times n}) = r+1$ since in this case, all proper $r$ colorings of $K_r^{\times n}$ are of a specific type.  For an arbitrary (prime) graph $H$, it is not immediately clear if $\chi_D(H^{\times n})>\chi(H)$. It would be interesting to  find some  characterization of graphs $H$ with $\chi_D(H^{\times n}) =\chi(H)+1$ for large $n$.  
\item For a given $k \in \bN$, we obtained nontrivial examples of family of graphs $G$ with arbitrarily large chromatic number which have $\chi_D(G)>\chi(G)$ and with $|Aut(G)|$ reasonably small. It is not immediately clear if we can find infinite families of graphs $G$ with $\chi_D(G) > \chi(G)$ while $|Aut(G)| = O(|G|)$. If we were to hazard a guess, our immediate guess would be no but we do not have sufficient reason to justify the same.
\item While we have attempted to construct non-trivial families of bipartite graphs with large distinguishing chromatic number, it would be interesting to construct nontrivial examples of graphs with arbitrary chromatic number, and arbitrarily large distinguishing chromatic number. 
\end{itemize}

\newpage
\section{Appendix}
\subsection{The Levi Graph $LG_2$}
Firstly, we remark that the upper bound $\chi_D(G)\leq 2\Delta-2$ whenever $G$ is bipartite and $G \ncong K_{\Delta-1,\Delta}, K_{\Delta, \Delta },$ which appears in
\cite{LS}, gives $\chi_D(LG_q)\leq 2q$. In particular, $\chi_D(LG_2)\leq 4$. We shall show that in fact $\chi_D(LG_2) = 4$. 

We first set up some notation,  let $\{e_1, e_2, e_3 \}$ be the standard basis of the vector space $V$
with $e_1 = (1, 0, 0), e_2 = (0, 1, 0)$ and $e_3 = (0, 0, 1).$  For $g, h, k \in F_q,$ a vector $v \in V$ is denoted by $(g, h, k)$ if $ v = ge_1 + he_2 + ke_3.$
 A point $p\in\Pa $ is denoted by  $(g, h, k)$ if $p = < ge_1 + he_2 + ke_3 >.$ Thus, there are $q^2$ points in the form $(1, h, k)$ such that $ h,k \in \bF_q,$  $q$ points in the
form of $(0, 1, k)$ such that $k \in \bF_q$ and finally the point $(0, 0, 1)$ to account for a total of $q^2 + q + 1$ points in \PG.

We start with the following definition.
\begin{defn} 
A  coloring of the Levi graph is said to be {\bf Monochromatic} if all the vertices in one set of the vertex partition have the same color.
\end{defn}
\begin{lem}\label{mono}
$LG_2$ does not have a proper distinguishing monochromatic $3$-coloring.
\end{lem}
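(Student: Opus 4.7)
The plan is to assume for contradiction that $LG_2$ admits a proper distinguishing monochromatic $3$-coloring $C$. By the self-duality of the Fano plane, any point-line polarity lifts to a graph automorphism of $LG_2$ interchanging the bipartition classes $\Pa$ and $\La$, so by relabeling colors we may assume that every line receives color $1$. Properness then forces every point to receive color $2$ or $3$, giving a partition $\Pa = \Pa_2 \sqcup \Pa_3$. Any non-trivial color-preserving $\phi \in Aut(LG_2)$ must fix the color class $\La$ setwise, hence preserves the bipartition, and therefore arises from a collineation of $PG(2,\bF_2)$, i.e., $\phi \in PGL(3,\bF_2)$. (A polarity would move the color-$1$ class $\La$ onto $\Pa$, so polarities are automatically excluded.) Thus $C$ is distinguishing if and only if the setwise stabilizer of $\Pa_2$ in $PGL(3,\bF_2)$, acting on the seven points of $PG(2,\bF_2)$, is trivial.

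The problem therefore reduces to showing that \emph{every} subset of $\Pa$ has a non-trivial setwise stabilizer in $PGL(3,\bF_2)$. Since the stabilizers of $\Pa_2$ and $\Pa \setminus \Pa_2$ coincide, we may assume $|\Pa_2| \le 3$. The relevant facts are: $|PGL(3,\bF_2)| = 168$; the action on points is $2$-transitive; and the $3$-element subsets of $\Pa$ split into exactly two orbits, namely the $7$ collinear triples (lines) and the $28$ triangles. Orbit-stabilizer then gives stabilizer orders $168$ when $|\Pa_2|=0$, $168/7 = 24$ when $|\Pa_2|=1$, $168/21 = 8$ when $|\Pa_2|=2$, and either $168/7 = 24$ (collinear) or $168/28 = 6$ (triangle) when $|\Pa_2|=3$. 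All four numbers are at least $6$, so in every case there is a non-trivial color-preserving automorphism, contradicting the assumption that $C$ is distinguishing.

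The only mildly delicate step is the orbit classification of $3$-element subsets under $PGL(3,\bF_2)$; this follows from the standard fact that $PGL(3,\bF_2)$ is transitive on ordered triples of non-collinear points (via projective frames) together with transitivity on lines. Everything else reduces to $2$-transitivity, orbit-stabilizer, and complementation, so the argument is a short finite case check. I do not anticipate any serious obstacle: the main danger is a sloppy enumeration of orbits of $3$-sets, but the collinear/triangle dichotomy is clean and the stabilizer orders are all safely bounded away from $1$. No explicit description of the fixing automorphism is needed, although concrete involutions could be written down in each case if desired.
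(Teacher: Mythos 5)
Your proof is correct, but it takes a genuinely different route from the paper's. Both arguments share the initial reduction: all lines are monochromatic, so a color-preserving automorphism must fix $\La$ setwise, hence is a collineation, and the question becomes whether some nontrivial element of $PGL(3,\bF_2)$ stabilizes the $2$-coloring of the point set. From there the paper argues constructively: it takes the smaller point color class, splits into cases according to the rank of its linear span ($1$, $2$, or $3$), and in each case writes down an explicit nontrivial linear map of $\bF_2^3$ fixing that class setwise. You instead prove the stronger statement that \emph{every} subset of the seven points has a nontrivial setwise stabilizer in $PGL(3,\bF_2)$, via orbit--stabilizer: after reducing to subsets of size at most $3$ by complementation, $2$-transitivity together with the collinear/triangle dichotomy for $3$-sets gives stabilizer orders $168$, $24$, $8$, $24$, and $6$, all exceeding $1$. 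Your counting checks out: $\binom{7}{3}=35=7+28$, and $PGL(3,\bF_2)$ is simply transitive on the $7\cdot 6\cdot 4=168$ ordered non-collinear triples, so the triangle stabilizer has order exactly $6$. The trade-off is that the paper's argument is more elementary and self-contained --- it uses only linear algebra over $\bF_2$ and exhibits the fixing automorphism explicitly --- whereas yours is shorter, has essentially no casework, and delivers quantitative information (the actual stabilizer orders), at the cost of invoking standard facts about $PGL(3,\bF_2)$, namely its order and its transitivity on points, pairs, lines, and frames.
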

\begin{proof}
Assume that $LG_2$ has a proper distinguishing monochromatic $3$-coloring. Without loss of generality let the line set $\mathcal{L}$ be colored with a single color, say red.  Call the remaining two colors blue and green, say, which are the colors assigned to the vertices in $\Pa$. We shall refer to the set of points that are assigned a particular color, say green, as the  color class $Green$. By rank of a color class $\C$ (denoted $r(\C)$), we mean the rank of the vector subspace generated by $\C$. Observe that a nontrivial  linear map $T$ that fixes the color class $Green$, must necessarily also fix the color class $Blue$, so any such linear map would correspond to an automorphism that preserves each color class.\\
For any $2$-coloring of $\Pa$ (which has $7$ points), one of the two color class has fewer than four points. Without loss of generality, assume that this is the color class $Green$. Firstly, if $r(Green)\le 2$ then consider a basis $B$ of $V$ which contains a maximal linearly independent set of points in color class $Green$. If $r(Green)=2$, then the linear map $T$ obtained by swapping the elements of the color class $Green$ in $B$, and fixing every other basis element is a non-trivial linear transformation of $V$ which necessarily fixes the color class $Green$. If $r(Green) = 1$, then consider the map $T$ which fixes the green point of $B$ and swaps the other two (necessarily Blue) is a nontrivial linear transform that fixes the color class $Green$. Finally, if $r(Green) = 3$, then  let $T$ be the map that swaps two of them and fixes the third. Again, this map is a nontrivial linear map that fixes every color class. 
\end{proof}
We now set up some notation. Denote the Points in $LG_2$ by $\{ e_1, e_2, e_3, e_1+e_2, e_1
+ e_3, e_2 + e_3, e_1 + e_2 + e_3 \}$ and denote the lines in the following way:
\begin{enumerate}
 \item $l_1$ : $\langle e_1, e_2\rangle$ the line ( two dimensional subspace) spanned by $e_1$ and  $e_2$.
 \item $l_2$ : $\langle e_1, e_3\rangle$.
 \item $l_3$ : $\langle e_2, e_3\rangle$.
 \item $l_4$ : $\langle e_1, e_2 + e_3\rangle$.
 \item $l_5$ : $\langle e_2, e_1 + e_3\rangle$.
 \item $l_6$ : $\langle e_3, e_1 + e_2\rangle$.
 \item $l_7$ : $\langle e_1 + e_3, e_2 + e_3\rangle$.
\end{enumerate}
\begin{thm}\label{dl2n3} 
$\chi_D(LG_2)=4$. 
\end{thm}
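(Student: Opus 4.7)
The plan is to combine the upper bound $\chi_D(LG_2) \le 4$ (noted at the beginning of this appendix, coming from the bipartite bound of \cite{LS}) with the matching lower bound $\chi_D(LG_2) > 3$. For the lower bound I will show that no proper $3$-coloring of $LG_2$ can be distinguishing. Lemma \ref{mono} already handles the case where one side of the bipartition is monochromatic, so it suffices to treat proper $3$-colorings in which both $\Pa$ and $\La$ receive at least two colors. Writing $\Pa = P_1 \sqcup P_2 \sqcup P_3$ and $\La = L_1 \sqcup L_2 \sqcup L_3$, properness is the statement that no point of $P_i$ lies on a line of $L_i$.

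For each such coloring I would exhibit a non-trivial element of the full automorphism group of $LG_2$ that preserves every color class. Recall that every automorphism of $LG_2$ mapping $\Pa$ into itself is induced by a collineation of $PG(2,\bF_2)$, hence by an element of $PGL(3,\bF_2)$; this group has order $168$ and acts $2$-transitively on $\Pa$. Moreover, duality of the Fano plane induces an automorphism of $LG_2$ that swaps $\Pa$ and $\La$, giving some extra flexibility when linear collineations alone are insufficient. The construction of the stabilizing automorphism will follow the basis-swapping idea in Lemma \ref{mono}: choose a basis of $V = \bF_2^3$ whose elements are adapted to the partition of $\Pa$, and define a non-identity linear map permuting basis vectors inside color classes; properness then forces each $L_j$ to be preserved as well.

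Next I would reduce to a small number of cases by enumerating the partition shapes $(|P_1|,|P_2|,|P_3|)$ and $(|L_1|,|L_2|,|L_3|)$ compatible with properness and non-monochromaticity. The Fano-plane incidence structure rules out most shapes. For example, if $|P_i| \ge 4$ and $P_i$ contains no line, then any line in $L_i$ must be contained in the $3$-element complement $\Pa \setminus P_i$, which is possible only when that complement is itself a line; and if $P_i$ contains a line $\ell$, then every other line meets $\ell$ in exactly one point, so the colors of all non-$\ell$ lines are forced. Such observations trim the feasible shapes to a short list (up to relabeling of colors and duality), such as the configurations where $P_i = \ell \cup \{p\}$ with $p \notin \ell$, or where $P_i$ is precisely a line. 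In each surviving case the partition is determined, up to the action of $PGL(3,\bF_2)$, by a small number of distinguished points and lines, and the pointwise stabilizers of these configurations in $PGL(3,\bF_2)$ are non-trivial (for instance, the stabilizer of a line has order $24$, of a point has order $24$, and of a non-incident point-line pair has order $6$), yielding the desired non-identity color-preserving automorphism.

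The main obstacle is the regime where every color class has size $2$ or $3$, since there the partition is finely constrained and a stabilizing basis-swap is least obvious to write down. Here I would rely on the precise orbit structure of point and line stabilizers in $PGL(3,\bF_2)$, together with the duality automorphism of $LG_2$, to reduce to a configuration already covered by a previous case or to a direct stabilizer computation. Because $LG_2$ has only $14$ vertices and $|PGL(3,\bF_2)| = 168$, the case analysis is finite and each case can be settled explicitly, contradicting the assumption that the coloring is distinguishing and establishing $\chi_D(LG_2) = 4$.
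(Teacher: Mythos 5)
Your overall strategy --- combine the upper bound from \cite{LS} with a case analysis showing that every proper $3$-coloring of $LG_2$ admits a nontrivial color-preserving automorphism --- is the same as the paper's, but the proposal has a genuine gap at its central step. You assert that once a non-identity collineation preserves each point class $P_i$, ``properness then forces each $L_j$ to be preserved as well.'' This is false. A collineation $\sigma$ preserving the $P_i$ setwise only guarantees that a line $l$ and its image $\sigma(l)$ meet the same point classes; it does not force $l$ and $\sigma(l)$ to carry the same color, since two lines with identical incidence patterns against the $P_i$ may lie in different classes $L_j$. This is precisely where the real work in the paper's proof lies: after reducing (via the observations that some three linearly independent points share a color and that two of $e_1+e_2$, $e_2+e_3$, $e_1+e_3$ share a color) to the swap $e_1\leftrightarrow e_3$, which fixes every point class but exchanges $l_1$ with $l_3$ and $l_4$ with $l_6$, the paper must separately handle the cases where $\{l_1,l_3\}$ or $\{l_4,l_6\}$ are \emph{not} monochromatic, and in each of half a dozen subcases exhibit a different explicit automorphism. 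Your plan collapses all of this into one unjustified sentence.

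The second, related, problem is that the case analysis is only announced, not performed: you explicitly defer ``the regime where every color class has size $2$ or $3$'' to an unspecified ``direct stabilizer computation,'' and the stabilizer orders you quote (of a point, a line, a non-incident point-line pair) concern stabilizers of geometric configurations, not of the full colored structure including the line classes. The analysis is certainly finite and could in principle be completed along your lines, but as written the proposal does not establish $\chi_D(LG_2)>3$. A cleaner entry point, which the paper uses, is to first show that any proper $3$-coloring must place three linearly independent points in one class: otherwise the point classes have sizes $(3,3,1)$ or $(3,2,2)$ with some class $E$ of size three of rank two, i.e.\ a line; every line meets $E$, so no line shares $E$'s color, and then any two differently colored points of $\Pa\setminus E$ would leave no admissible color for the line joining them, forcing $\Pa\setminus E$ to be monochromatic and recreating an independent triple. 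This reduction shrinks the shape enumeration considerably before the line-class casework even begins.
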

\begin{proof} By the remark at the beginning of the section, we have $\chi_D(LG_2)\le 4$, so it suffices to show $\chi_D(LG_2)>3$.
We first  claim that  if $LG_2$ has a proper distinguishing $3$-coloring, then three linearly independent points (points corresponding to three linearly independent vectors) get the same color.\\
Suppose the claim is false. Then each monochrome set $\C$ of points satisfies $r(\C)\le 2$. Since any set of four points contains three linearly independent points and $|V(LG_2)|=7$, a $3$-coloring yields a monochrome set of points of size exactly three. Denote this set by  $E$ and observe that $E$ in fact corresponds to a line $l_E\in\List$. Since any two lines intersect, no line is colored the same as the points of $E$. If $p,p'\in\Pa\setminus E$ are colored differently, then the line $l_{p,p'}$ cannot be colored by any of the three colors contradicting the assumption.  Consequently,  every point in $\Pa\setminus E$ must be colored the same if the coloring were to be proper. But then this gives a color class with four points which contains three linearly independent points contradicting that the claim was false.
Without loss of generality, suppose $e_1, e_2, e_3 $ are all colored red. Since $l_7$ contains the points  $ e_1 + e_2, e_2 + e_3$ and $e_1 + e_3,$ these three points cannot all have different colors. Hence at least two of these three points are in the same color class. 

Without loss of generality, assume that $e_1 + e_2$ and $e_2 + e_3$ have the same color. 
   Now observe that the map $\sigma$ defined by $\sigma(e_1)=e_3,\sigma(e_3)=e_1, \sigma(e_2)=e_2$, induces an automorphism of $LG_2$ that fixes every color class within $\Pa$. Furthermore $\sigma$ swaps $l_1$ with $l_3$ and $l_4$ with $l_6$ and fixes all the other lines. If  the sets of lines $\{l_4,l_6\}$ and $\{l_1,l_3\}$ are both monochrome  in $\List$,  then note that $\sigma$ fixes every color class contradicting that the coloring in question is distinguishing. Thus we consider the alternative, i.e.,  the possibilities that the lines $l_1$ and $l_3$ (resp. $l_4$ and $l_6$) are in different color classes, and in each of those cases produce a non-trivial automorphism fixing every color class.
   
\textit{Case I : } $l_4$ and $l_6$ have different colors, say blue and green respectively.
 In this case, the point set witnesses at most two colors and none of the points of $\Pa\setminus\{e_1 + e_3\}$ can be colored blue or green. Moreover, by
lemma \ref{mono}, all the seven points cannot be colored red (note that $e_1,e_2,e_3$ are colored red). Consequently,  $e_1 + e_3$ is 
colored, say blue, and all the other points are colored red. The  $l_7,$ $l_5$ and $l_2$ are all colored green since all these three
lines contain the point $e_1 + e_3.$
As mentioned above, we shall in every case that may arise, describe a non-trivial automorphism $\sigma$ that fixes each color class. As before, we shall only describe its action on the set $\{e_1,e_2,e_3\}$.\\
\textit{Sub case 1 : } $l_1$ is colored blue. Then 
 $\sigma(e_1) = e_1, \sigma(e_2)=e_2 + e_3,\sigma(e_3)= e_3$ fixes $e_1 + e_3,$ swaps $l_1$ with $l_4$ and fixes $l_3.$ Consequently, it fixes every color class.\\
\textit{Sub case 2 : }$l_1$ is colored green and $l_3$ is colored blue.
In this case,  $\sigma(e_1)= e_2, \sigma(e_2)= e_1,\sigma(e_3)= e_1 + e_2 + e_3$ does the job.
\textit{ Sub case 3 : }  $l_1$ and $l_3$ are both colored green. In this case, the only line which is colored blue is $l_4.$ Then  $\sigma(e_1)= e_2 + e_3, \sigma(e_2)= e_2,\sigma(e_3)= e_1 + e_2,$ does the job.\\
From the above it follows that $l_4$ and $l_6$ cannot be in different color
classes. So, we now consider the other possibility, namely that $l_1$ and $l_3$ are in different color classes.
 
\textit{ Case II: } $l_6$ and $l_4$ have the same color but $l_1$ and $l_3$ are in different color classes, say blue and green respectively.
Here we first note that $e_1 + e_2$ and $e_2 + e_3$ are necessarily red because they belong to $l_1$ and $l_3$ respectively. Again, we are led to three subcases:\\
\textit{ Sub case 1 : } $e_1 + e_3$ and $e_1 + e_2 + e_3$ are both colored blue. Here, it is a straightforward check to see that every $l\neq l_1$ is colored green. Then, one can check that $\sigma(e_1)= e_1 + e_2, \sigma(e_2)=e_2, \sigma(e_3)=e_3$  fixes every color class.\\
\textit{ Sub case 2 : } The point $e_1 + e_3$ is colored red and $e_1+ e_2 + e_3$ is colored blue. Again, one can check in a straightforward manner, that for all $3\le i\le 6$, $l_i$ is colored green. If $l_2$ is blue then  $\sigma(e_2)=e_3,\sigma(e_3)=e_2, \sigma(e_1)=e_1$ does the job. If $l_2$ is colored green, $\sigma(e_1)=e_2,\sigma(e_2)=e_1, \sigma(e_3)=e_3$ does the job.\\
\textit{ Sub case 3 : } $e_1 + e_2 + e_3$ is colored red and $e_1 + e_3$ is colored blue. Here we first observe that $l_2,l_3,l_5,l_7$ are all necessarily green. Also, by the underlying assumption (characterizing Case II), $l_4,l_6$ bear the same color. In this case,  $\sigma(e_1)=e_1 + e_2, \sigma(e_3)= e_2 + e_3, \sigma(e_2)=e_2$, does the job.
This exhausts all the possibilities, and hence we are through.
\end{proof}
\subsection{The Levi graph $LG_3$}
As remarked earlier, it is not too hard to show that $\chi(LG_q)\le 6$, so the same holds for $q=3$ as well. The next proposition shows an improvement on this result.
\begin{thm}
 $\chi_D(LG_3) \leq 5.$ 
\end{thm}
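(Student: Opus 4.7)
The plan is to apply Lemma \ref{motionlemma} to $LG_3$ with the bipartite $2$-coloring $C=\{\Pa,\La\}$, $C_1=\Pa$, and $t=4$, which would yield $\chi_D(LG_3)\le\chi(LG_3)+t-1=5$. Since $q=3$ is prime, $P\Gamma L(V)=PGL(V)$, and the subgroup of $Aut(LG_3)$ preserving the bipartition is exactly $\G=PGL(3,3)$, of order $|\G|=5616=2^4\cdot 3^3\cdot 13$; the smallest prime divisor of $|\G|$ is $r=2$. Thus the task reduces to verifying
$$\bE(N)=\sum_{A\in\G}4^{\theta_A-13}<2.$$

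First I would enumerate the conjugacy classes of $PGL(3,3)$ via rational canonical forms over $\bF_3$ and determine, for each, the class size, the fixed-point count $F_A:=|Fix_A(\Pa)|$ on the $13$ projective points, and the exact orbit count $\theta_A$. A non-identity element of $PGL(3,3)$ fixes a proper subconfiguration of $PG(2,3)$, and the fact that $\bF_3^*=\{1,2\}$ forces any diagonalizable element (mod scalar) to exhibit at most two distinct eigenvalues, so $F_A\in\{0,1,2,4,5\}$. The full list of non-identity classes is: $117$ homologies ($F=5$, $\theta=9$); $104$ elations ($F=4$, $\theta=7$); $624$ Jordan-$3$ elements of order $3$ ($F=1$, $\theta=5$); $702$ order-$4$ elements with characteristic polynomial $(X-\lambda)(X^2+1)$ ($F=1$, $\theta=5$); $936$ order-$6$ elements with Jordan structure $(X-1)^2(X-2)$ and a $2$-block at the eigenvalue $1$ ($F=2$, $\theta=5$); $1404$ order-$8$ elements whose characteristic polynomial contains an irreducible quadratic over $\bF_3$ with roots of order $8$ in $\bF_9^*$ ($F=1$, $\theta=3$); and $1728$ Singer elements of order $13$ ($F=0$, $\theta=1$). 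These counts sum to $5616$, so the enumeration is complete.

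Substituting into the Motion-Lemma estimate gives
$$\bE(N)=1+\frac{117}{2^8}+\frac{104}{2^{12}}+\frac{624+702+936}{2^{16}}+\frac{1404}{2^{20}}+\frac{1728}{2^{24}}\approx 1.52<2,$$
so Lemma \ref{motionlemma} applies and yields $\chi_D(LG_3)\le 5$. The main obstacle is the exact computation of $\theta_A$ for the higher-order classes (orders $6$ and $8$): for an order-$6$ element one uses that $A^3$ is a homology and $A^2$ is an elation, and compatibility of their fix sets with that of $A$ forces a $1+1+3+2+6$ orbit decomposition; for an order-$8$ element $A^4$ is a homology and $A^2$ is an order-$4$ element with a single fixed projective point, constraining the orbit sizes of $A$ on the non-fixed points to $4$ and $8$ and producing the $\theta_A=3$ value. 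With these checks in hand, the bound $\bE(N)<2$ follows.
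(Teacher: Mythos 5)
Your proposal is correct, but it proves the bound by a genuinely different route from the paper. The paper's own proof is constructive: it exhibits an explicit proper $5$-coloring of $LG_3$ (given by a figure and an adjacency table) and verifies it is distinguishing by a rigidity chain --- the line $\mathbf{001}$ is the unique vertex of color $1$, so it is fixed, so its four points (all differently colored) are fixed pointwise, so every line is fixed, so the automorphism is the identity. You instead invoke Lemma \ref{motionlemma} with the bipartite coloring, $C_1=\Pa$, $t=4$ and $\G=PGL(3,3)$, which requires exact values of $\theta_A$ class by class; note that the crude estimate $\theta_A\le F(C_1)+\tfrac{|C_1|-F(C_1)}{2}$ used in the paper for $q\ge 5$ is not good enough at $q=3$ even with $t=4$ (the homology class alone would contribute $117\cdot 4^{-2}>2$), so your refinement to exact orbit counts is essential and not merely decorative. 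I checked your class data against the structure of $L_3(3)$: the sizes $1,117,104,624,702,936,702+702,4\times 432$ sum to $5616$, the fixed-point counts $5,4,1,1,2,1,0$ and orbit counts $9,7,5,5,5,3,1$ are right (e.g.\ for the order-$6$ class the decomposition $1+1+2+3+6$ follows from comparing the fix sets of $A$, $A^2$ and $A^3$, and for order $8$ the invariant line carries a single $4$-cycle while the remaining $8$ points form one orbit since $A^4$ is a homology fixing only the other $5$ points), and the resulting sum $\bE(N)\approx 1.518<2=r$ is as you state; dualities of $LG_3$ cause no trouble because any automorphism fixing all refined classes fixes their union $\Pa$ and hence lies in $\G$. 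What each approach buys: the paper's coloring is elementary and checkable by hand but gives no indication of how special it is; your argument is non-constructive but shows that a positive proportion of random $4$-part refinements of $\Pa$ are distinguishing, and it demonstrates that the paper's own lemma does reach $\chi_D(LG_3)\le 5$ once the orbit data is computed exactly --- at the cost of importing the full conjugacy-class analysis of $PGL(3,3)$, which is the one place a reader must take care (any error in a single $\theta_A$ for a large class would sink the bound).
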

\begin{proof}
As indicated earlier  we denote the points $p\in\Pa$ as mentioned in the beginning of this section. A line corresponding to the subspace $\{ (x, y, z) \in \mathcal{P}: ax + by + cz = 0\}$ is denoted ${\bf(a,b,c)}$.   We color the graph using the colors $1,2,3,4,5$ as in figure \ref{lg3} (the color is indicated in a rectangular box corresponding to the vertex)
\begin{figure}[h!]
 \centering
  \includegraphics[width=4in,height=3in]{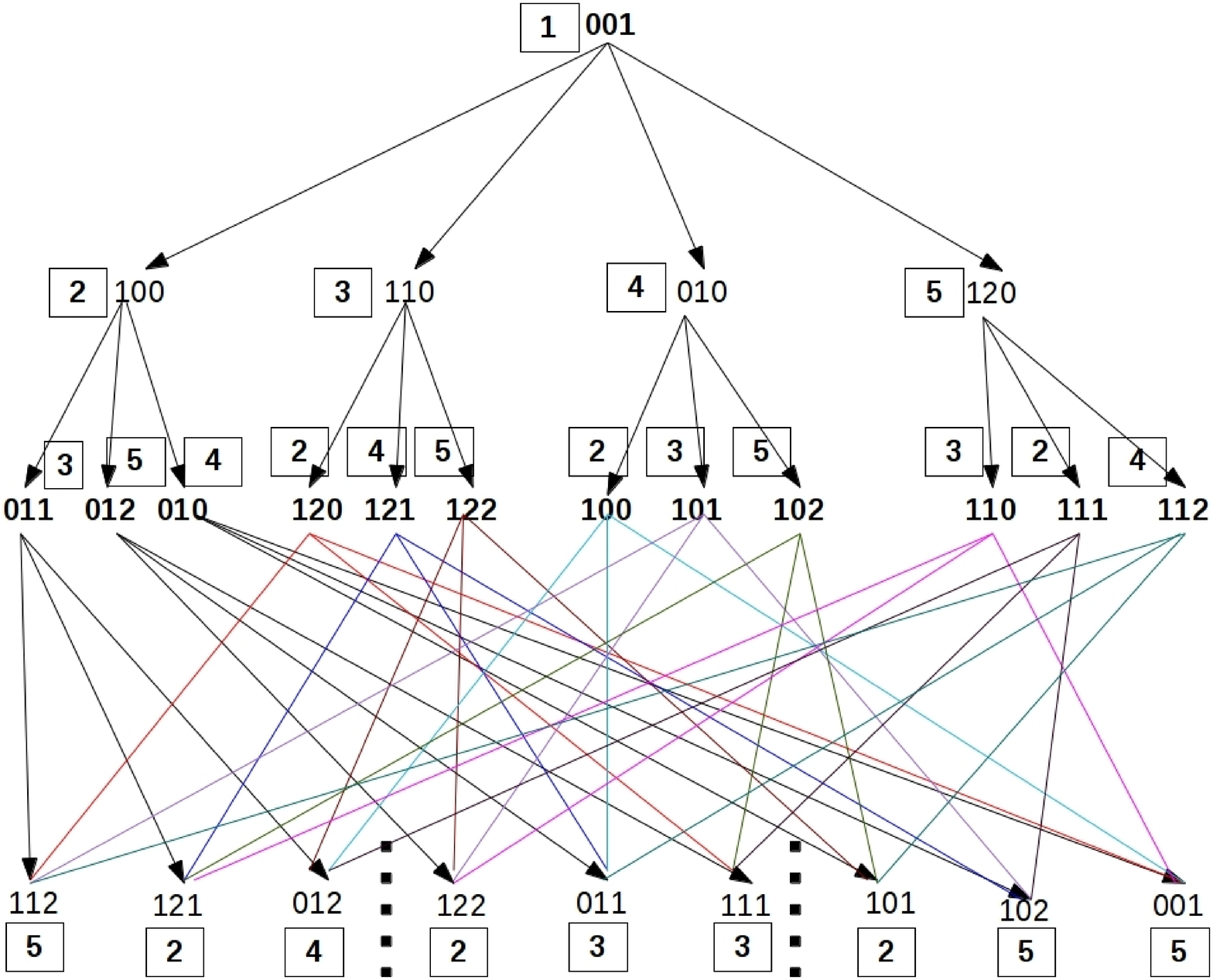}
 \caption{$LG_3$}\label{lg3}
\end{figure} 
It is straightforward to check that the coloring is proper.  For an easy check we provide below, a table containing adjacencies of  each $p\in\Pa$. \\ \\
\begin{table}[h]
\begin{tabular}{|l | c | c | c | c |c | c | c | c | c | c | c | c | c | r |}
  \hline                       
  Points $\rightarrow$ & 100 & 110 & 010 & 120 & 112 & 121 & 012 & 122 & 011 & 111 & 101 & 102 & 001\\
  \hline
 Lines &{\bf001} &{\bf 001} & {\bf 001} & {\bf 001} &{\bf 011} &{\bf 011} &{\bf 011} & {\bf 012} &{\bf 012} &{\bf 012} &{\bf 010} & {\bf 010} &{\bf 010}\\
  $\downarrow$ & {\bf 011} & {\bf 120} & {\bf 100} &{\bf 110} & {\bf 120} & {\bf 121} & {\bf 122} & {\bf 122} & {\bf 121} & {\bf 120} &{\bf 122} & {\bf 121} &{\bf 120}\\
  &{\bf 012} &{\bf 121} & {\bf 101} &{\bf 111} &{\bf 101} & {\bf 102} &{\bf 100} & {\bf 101} &{\bf 100} & {\bf 102} &{\bf 102} & {\bf 101} & {\bf 100}\\
  & {\bf 010} & {\bf 122} & {\bf 102} & {\bf 112} & {\bf 112} & {\bf 110} & {\bf 111} & {\bf 110} & {\bf 112} & {\bf 111} & {\bf 112} & {\bf 111} & {\bf 110}\\
  \hline  
\end{tabular}
\end{table}
Here the first row lists all the points in the projective plane of order $3$. The column corresponding to the vertex $p\in\Pa$ lists the set of lines $l\in\List$ such that $p\in l$, so that the columns are the adjacency lists for the vertices in $\Pa$. 
To see that this coloring is distinguishing, firstly, observe that the line ${\bf 001}$ is the only vertex with color $1.$ Therefore, any automorphism $\phi$ that fixes every color class necessarily fixes this line. Consequently,  the points on ${\bf 001}$ are mapped by $\phi$ onto themselves. Since each point on ${\bf 001}$ bears a different color, it follows that $\phi$ fixes each $p\in{\bf 001}$. In particular, for $1 \leq i \leq 4,$ $\phi$ maps each set $\{ l_{i1}, l_{i2}, l_{i3} \}$ onto itself. Here,  $\{l_{ij}, 1 \leq j \leq 3\}$ denotes the set of lines adjacent to the $i^{th}$ point of ${\bf 001}.$ But again note that by the coloring indicated, the vertices $l_{ij}$ and $l_{ij'}$ have different colors for each $i$, so $\phi(l_{ij})=l_{ij}$ for each pair $(i,j)$ with $1\le i\le 4, 1\le j\le 3$. Now it is a straightforward check to see that $\phi=I$. 
\end{proof}

 \end{document}